\long\def\@makefntext#1{
\protect\noindent \hbox to 3.2pt {\hskip-.9pt  
$^{{\eightrm\@thefnmark}}$\hfil}#1\hfill}               %CAN BE USED 
\def\@makefnmark{\hbox to 0pt{$^{\@thefnmark}$\hss}}    %ORIGINAL 
\def\ps@myheadings{\let\@mkboth\@gobbletwo
  \def\@oddhead{{\slshape\rightmark}\hfil{\footnotesize\thepage}}%{\hfill\hbox{}\rightmark}
  \def\@oddfoot{}
  \def\@evenhead{{\footnotesize\thepage}\hfil\slshape\leftmark}%{\leftmark\hbox{}\hfill}
  \def\@evenfoot{}
  \def\sectionmark##1{}\def\subsectionmark##1{}
}
\numberwithin{equation}{section}
\renewcommand\section{\@startsection {section}{1}{\z@}%
                                   {-3.5ex \@plus -1ex \@minus -.2ex}%
                                   {2.3ex \@plus.2ex}%
                                   {\tenbf\large\bfseries}}
\renewcommand\subsection{\@startsection{subsection}{2}{\z@}%
                                     {-3.25ex\@plus -1ex \@minus -.2ex}%
                                     {1.5ex \@plus .2ex}%
                                     {\normalfont\bfseries}}
\newcommand{\textlineskip}{\baselineskip=13pt}
\newcommand{\smalllineskip}{\baselineskip=10pt}
\newcommand{\copyrightheading}[1]
        {\vspace*{-2.5cm}\smalllineskip{\flushleft
        {\footnotesize Dipartimento di Matematica, Universit\`a di Trento}\\
        {\footnotesize Preprint UTM - #1}\\
        {\tiny %\textbf{\jobname.tex}~--~%
          \number\day/\number\month/\number\year%~--~%\timenow
        }% 
        }}
\newcommand{\timenow}{%
  \@tempcnta=\time \divide\@tempcnta by 60 \number\@tempcnta:\multiply
  \@tempcnta by 60 \@tempcntb=\time \advance\@tempcntb by -\@tempcnta
  \ifnum\@tempcntb <10 0\number\@tempcntb\else\number\@tempcntb\fi}
\def\abstracts#1#2#3{{
        \centering{\begin{minipage}{4.5in}\footnotesize\baselineskip=10pt
        \parindent=0pt #1\par 
        \parindent=15pt \footnotesize\baselineskip=10pt
        {\footnotesize\it Keywords}\/: #2\par
        \parindent=15pt \footnotesize\baselineskip=10pt
        {\footnotesize\it 1991 MSC}\/: #3
        \end{minipage}}\par}} 
\renewenvironment{thebibliography}[1]
        {\frenchspacing
         \ninerm\baselineskip=11pt
         \begin{list}{\arabic{enumi}.}
        {\usecounter{enumi}\setlength{\parsep}{0pt}     
         \setlength{\leftmargin 17pt}{\rightmargin 0pt}   %FOR 10--99 ITEMS
         \setlength{\itemsep}{0pt} \settowidth
        {\labelwidth}{#1.}\sloppy}}{\end{list}}
\def\pmb#1{\setbox0=\hbox{#1}
        \kern-.025em\copy0\kern-\wd0
        \kern.05em\copy0\kern-\wd0
        \kern-.025em\raise.0433em\box0}
\def\fnt#1#2{\footnotetext{\kern-.3em
        {$^{\mbox{\scriptsize #1}}$}{#2}}}
\def\runninghead#1#2{\pagestyle{myheadings}
\markboth{{\protect\footnotesize\it{\quad #1}}}
{{\protect\footnotesize\it{#2\quad}}}}
\font\tenbf=cmbx10
\font\ninerm=cmr9
\font\eightrm=cmr8
\newtheorem{theorem}{Theorem}[section]
\newtheorem{proposition}[theorem]{Proposition}
\newtheorem{definition}[theorem]{Definition}
\newtheorem{corollary}[theorem]{Corollary}
\newtheorem{lemma}[theorem]{Lemma}
\newtheorem{hypothesis}{Hypothesis}[section]
\newtheorem{remark}{Remark}[section]
\def\Om{{\Omega}}
\def\R{{\mathbb R}}
\def\ep{\varepsilon}
\def\P{\mathbb{P}}
\def\EE{{\mathbb E}}
\def\lb{{\lambda}}
\def\N{{\mathbb{N}}}
\begin{document}
\setlength{\textheight}{7.7truein}%FOR 2ND PAGE ONWARDS

\runninghead{S. Bonaccorsi, E. Mastrogiacomo}{Analysis of the
  stochastic FitzHugh-Nagumo system}

\normalsize\textlineskip
\thispagestyle{empty}
\setcounter{page}{1}

\copyrightheading{719}

\vspace*{.8truein}

\centerline{\bf Analysis of the stochastic FitzHugh-Nagumo system}
\baselineskip=13pt
%% \centerline{\bf with dynamical boundary conditions}
%% \baselineskip=13pt
\vspace*{0.3truein}

\centerline{\footnotesize Stefano BONACCORSI\footnotemark[1] \quad
  Elisa MASTROGIACOMO\footnotemark[2]}
\baselineskip=12pt
\centerline{\footnotesize\it Dipartimento di Matematica, Universit\`a
  di Trento,}
\baselineskip=10pt 
\centerline{\footnotesize\it via Sommarive 14, 38050 Povo (Trento),
  Italia}
\baselineskip=10pt 
\centerline{\footnotemark[1] \footnotesize\tt
  stefano.bonaccorsi@unitn.it}
\centerline{\footnotemark[2] \footnotesize\tt
  mastrogiacomo@science.unitn.it}
\vspace*{0.2truein}
\abstracts
{In this paper we study a system of stochastic differential equations
  with dissipative nonlinearity which arise in certain neurobiology
  models. Besides proving existence, uniqueness and continuous
  dependence on the initial datum, we shall be mainly concerned with
  the asymptotic behaviour of the solution. We prove the existence of
  an invariant ergodic measure $\nu$ associated with the transition
  semigroup $P_t$; further, we identify its infinitesimal generator in
  the space $L^2(H;\nu)$.}
%keywords:
{Stochastic FitzHugh-Nagumo system; Invariant measures; Wiener process;
Transition semigroup; Kolmogorov operator
}
%MSC classification:
{35K57; 60H15; 37L40}

\vspace*{4pt}
\baselineskip=13pt              
\normalsize                     

%--------------------  INIZIO DELL'ARTICOLO  -----------------------------

\section{Introduction}

Since the fundamental work of Hodgkin and Huxley
\cite{hodgkin/huxley}, several equations were proposed to model the
behavior of the signal propagation in a neural cell. The original
model was formed by a system of four equations, the first one modeling
how the electric impulses propagate along a long tube (the axon) that
we model as a (normalized) segment $(0,1)$, while the remaining were
concerned with various ions concentrations in the cell. Later, a more
analytically tractable model were proposed by FitzHugh \cite{fitzhugh}
and Nagumo \cite{nagumo}. In this paper we focus the interest on a
stochastic version of the FitzHugh-Nagumo model. It consists of two
variables, the first one, $u$, represents the {\em voltage variable}
and the second one, $w$, is the {\em recovery variable}, associated
with the concentration of potassium ions in the axon.  For a thorough
introduction to the biological motivations of this model we refer to
Murray \cite{murray} or Keener and Sneyd \cite{keener/sneyd}.

Let us consider the equation
\begin{equation}\label{eq:mot_ex}
  \begin{aligned}
    \partial_t u(t,\xi)&= \partial_{\xi} \left(c(\xi)\partial_{\xi}
    u(t,\xi) \right) - p(\xi)u(t,\xi) + f(u(t,\xi)) - w(t,\xi)
    \\
    &\phantom{= \partial_{\xi} \left(c(\xi)\partial_{\xi}
    u(t,\xi) \right) -}  + \partial_t
    \beta_1(t,\xi), \qquad t\geq 0,\quad \xi\in
    [0,1],
    \\
    \partial_t w(t,\xi) &= -\alpha w(t,\xi) + \gamma u(t,\xi) +
    \partial_t \beta_2(t,\xi),
    \\
    &\phantom{= \partial_{\xi} \left(c(\xi)\partial_{\xi}
    u(t,\xi) \right) -  + \partial_t
    \beta_1(t,\xi),} \quad t\geq 0, \quad \xi\in
    [0,1],
  \end{aligned}
\end{equation}    
where $u$ represents the electrical potential and $w$ is the recovery
variable; $\alpha$, $\gamma$, $c(\xi)$ and $p(\xi)$ are given
phenomenological coefficients satisfying the conditions stated below;
%% a compatibility condition
%% (see Hypothesis \ref{hp:dissA+F} below) and, in particular, $c$ and
%% $p$ are functions of class $C^1([0,1])$; 
$\beta_1,\beta_2$ are independent Brownian motions; $f$ is a nonrandom
real-valued function with suitable smoothness properties: in the
reduced FitzHugh-Nagumo system $f$ is a polynomial of odd degree,
precisely $f(u)=-u(u-1)(u-\xi_1)$, where $0 < \xi_1 < 1$ represents
the voltage threshold. Problem (\ref{eq:mot_ex}) shall be endowed
with boundary and initial conditions. The first one are necessary only
for the potential $u(t,\xi)$ and we assume they are of Neumann type:
$\partial_\xi u(t,0) = \partial_\xi u(t,1) = 0$; the initial
condition are given, for simplicity, by continuous functions 
\begin{equation*}
  u(0,\xi) = u_0(\xi), \qquad v(0,\xi) = v_0(\xi)
\end{equation*}
with $u_0, v_0 \in C([0,1])$.

We shall introduce the main assumptions on the coefficients of problem
(\ref{eq:mot_ex}) that will be used without stating in the following.
For this, it is necessary to introduce the operator $A_0$ on the space
$L^2(0,1)$, defined on $D(A_0) = \{u \in H^2(0,1) \mid \partial_\xi
u(\zeta) = 0,\ \zeta=0,1\}$ by
\begin{equation*}
  A_0 u(\xi) = \partial_{\xi} \left(c(\xi)\partial_{\xi}
    u(\xi)\right), \qquad \xi \in [0,1],\ u \in D(A_0).
\end{equation*}

\begin{hypothesis}\label{hp:1}
  The constants $\alpha$ and $\gamma$ are strictly positive real
  numbers; the functions $c(\xi)$ and $p(\xi)$ belong to $C^1([0,1])$,
  $c = \min\limits_{[0,1]} c(\xi) > 0$ and $p = \min\limits_{[0,1]}
  p(\xi) > 0$. Further, for $\xi_1$ from the definition of the
  FitzHugh-Nagumo nonlinearity, it holds
  \begin{equation}\label{eq:1}
    3p-(\xi_1^2-\xi_1+1) \geq 0.
  \end{equation}
  There exists a complete orthonormal basis $\left\{e_k\right\}$ of
  $L^2(0,1)$ made of eigenvectors of $A_0$, such that the $\{e_k\}$
  satisfy a uniform bound in the sup-norm, i.e., for some $M > 0$ it
  holds
  \begin{equation}\label{eq:stima_e_k}
    \left|e_k(\xi)\right|\leq M, \qquad \xi \in [0,1],\ k \in {\mathbb
    N}.
  \end{equation}
  Let $\beta_1$, $\beta_2$ be independent Wiener processes on a
  filtered probability space $(\Om, \mathcal{F}, \mathcal{F}_t,
  \mathbb{P})$ with continuous trajectories on $[0,T]$ for any $T>0$;
  this means that 
  \begin{equation*}
    \beta_i \in C([0,T]; L^2(\Om, L^2(0,1)))
  \end{equation*}
  with $\mathcal{L}(\beta_i (t,\cdot)) \cong
  \mathcal{N}(0,t\sqrt{Q_i})$ for suitable linear operators $Q_i$,
  $i=1,2$ on $L^2(0,1)$.
  
  With no loss of generality we can assume that the operators $Q_i$,
  $i=1,2$ diagonalize on the same basis $\left\{e_k\right\}$.
  Therefore, there exist sequences $\lambda_k^i$, $i=1,2$, $k \in \N$,
  of positive real numbers such that
  \begin{equation*}
      Q_i e_k = \lb_k^i e_k, \qquad i=1,2, \quad k=1, 2, \dots.
  \end{equation*}
  Furthermore, we assume that 
  \begin{equation*}
    \sum_{i=1}^2 \sum_{k=1}^\infty \lb_k^i < \infty;
  \end{equation*}
  hence ${\rm Tr}Q_i < \infty$.
\end{hypothesis}

It is convenient to write \eqref{eq:mot_ex} in an abstract form. To
this end we set $H=L^2(0,1)\times L^2(0,1)$ endowed with the 
inner product 
\begin{equation*}
  \left\langle (u_1,w_1), (u_2,w_2) \right\rangle_H = \gamma \,
  \langle u_1, u_2 \rangle_{L^2} + \langle w_1, w_2 \rangle_{L^2}
\end{equation*}
where $\langle \cdot,\cdot\rangle_{L^2}$ is the usual scalar product
in $L^2(0,1)$ and $\gamma$ is the constant from (\ref{eq:mot_ex}). The
corresponding norm is denoted by $\left|\cdot\right|_H$.
%% while that of
%% $L^p$ and $H^p$ are given respectively by the symbol
%% $\left|\cdot\right|_{L^p}$ and $\left|\cdot\right|_{H^p}$, $1\leq
%% p\leq \infty$. 
We also introduce the space $V = H^1(0,1)\times L^2(0,1)$ with the norm
\begin{equation*}
  \|x\|^2_V = \gamma \left|x_1\right|^2_{H^1} +
  \left|x_2\right|^2_{L^2}.
\end{equation*}

On the space $H$ we introduce the following operators: 
\begin{equation}
  \begin{aligned}
    &A \, : \, D(A) \subset H \, \to \, H,\quad D(A) = D(A_0) \times
    L^2(0,1)
    \\
    &A \left({u \atop w}\right) = \begin{pmatrix} A_0 u & -w
      \\
      \gamma u & -\alpha w
    \end{pmatrix}
  \end{aligned}
\end{equation}
and
\begin{equation}
  \begin{aligned}
    &F\, : \, D(F) := L^6(0,1)\times\, L^2(0,1)\to\, H \\
    &F \left({u \atop w}\right) = 
      \begin{pmatrix}
        -u(u-\xi_1)(u-1)\\
        0
      \end{pmatrix}.
\end{aligned}
\end{equation}
In the following, setting $X = \left({u \atop w}\right)$, we rewrite
equation (\ref{eq:mot_ex}) as 
\begin{equation}\label{eq:abstract}
  \begin{aligned}
    {\rm d}X(t)&= (AX(t)+F(X(t))) \, {\rm d}t + \sqrt{Q} \, {\rm
      d}W(t)
    \\
    X(0)&=x\in H
  \end{aligned}
\end{equation}
where $W(t)=(w_1(t), w_2(t))$ is a cylindrical Wiener process
on $H$ and $Q$ is the operator matrix
\begin{equation*}
  Q = \begin{pmatrix}
    Q_1 & 0\\
    0 & Q_2\end{pmatrix}.
\end{equation*}

Our first result is an existence and uniqueness theorem for the
solution of equation (\ref{eq:abstract}).

\begin{theorem}\label{thm:ex-intro}
  Let $x\in D(F)$ (resp. $x\in H$). Then, under the assumptions in
  Hypothesis \ref{hp:1}, there exists a unique mild (resp.
  generalized) solution 
  \begin{equation*}
    X \in L^2_W(\Om;C([0,T];H)) \cap L^2_W(\Om;L^2([0,T];V))
  \end{equation*}
  to equation \eqref{eq:abstract} which depends continuously on the
  initial condition.
\end{theorem}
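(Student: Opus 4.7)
The strategy is the classical subtraction-of-the-stochastic-convolution reduction combined with a pathwise monotonicity argument tailored to the FitzHugh--Nagumo cubic. I would proceed in three blocks.

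\emph{Stochastic convolution.} First I study $Z(t)=\int_0^t e^{(t-s)A}\sqrt{Q}\,dW(s)$. Under Hypothesis~\ref{hp:1}, the trace-class property of $Q$ together with the smoothing properties of the semigroup generated by $A$ yields $Z\in L^2_W(\Omega;C([0,T];H))$; the uniform sup-norm bound (\ref{eq:stima_e_k}) on $\{e_k\}$ then upgrades this, via a Kolmogorov--Chentsov argument applied to the spectral expansion, to
\begin{equation*}
  Z \in L^p\bigl(\Omega;\,C([0,T];\,L^p(0,1)\times L^2(0,1))\bigr)\qquad \text{for every }p<\infty,
\end{equation*}
which is what the cubic nonlinearity will require later.

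\emph{Pathwise deterministic problem.} Setting $Y(t)=X(t)-Z(t)$, equation \eqref{eq:abstract} reduces to the $\omega$-pathwise PDE
\begin{equation*}
  Y'(t) \,=\, AY(t) + F(Y(t)+Z(t)),\qquad Y(0)=x.
\end{equation*}
The weighted inner product on $H$ was chosen precisely so that the skew coupling in $A$ cancels, yielding $\langle AX,X\rangle_H = \gamma\langle A_0u,u\rangle_{L^2} - \alpha|w|_{L^2}^2$. Moreover (\ref{eq:1}) is exactly the algebraic inequality under which the pointwise map $u\mapsto -p(\xi)u+f(u)$ is non-increasing, so after moving the $-p(\xi)u$ term into the linear part the full drift $A+F$ becomes $m$-dissipative on $H$. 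I would then construct a solution by Yosida-regularising $F$ to a globally Lipschitz $F_\alpha$, solving the resulting semilinear equation by Banach fixed point, and passing to the limit $\alpha\to 0$ through monotone/dissipative estimates in $H$ and $V$.

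\emph{$L^6$-bound, continuous dependence, generalised solutions.} To justify $F(Y+Z)$ in the mild formulation one needs $Y\in L^\infty(0,T;L^6)$; I obtain this by testing the approximate equation against $Y|Y|^4$ and absorbing the quintic cross-terms involving $Z$ through the cubic dissipativity together with Young's inequality, thereby consuming the $L^p$-bounds on $Z$ from the first block. Uniqueness and continuous dependence follow from applying the same dissipative estimate to the difference of two solutions, and the bound is purely linear in $|x-\bar x|_H$ since the same $Z$ appears in both equations. Finally, for $x\in H$ only, I approximate by $x_n\in D(F)$, show that the corresponding mild solutions form a Cauchy sequence in $L^2_W(\Omega;C([0,T];H))$, and define the generalised solution as the limit. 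The technical core and main obstacle is the $L^6$-estimate uniform in the Yosida parameter $\alpha$: everything else reduces to standard monotone-operator and density arguments.
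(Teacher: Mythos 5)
Your plan is a legitimate alternative route, essentially the Da Prato--Zabczyk ``subtract the stochastic convolution and solve a pathwise dissipative PDE'' scheme, and it is genuinely different from what the paper does. The paper never reduces to a pathwise deterministic problem in the existence proof: it regularises the cubic by an explicit rational approximation $f_{\eta,\ep}$ (a Yosida-type approximant, as in your plan), solves the Lipschitz problems by the standard theory, and then obtains two uniform estimates directly on the stochastic equation via It\^o's formula --- an energy estimate in $H$ and $V$, and, crucially, the bound $\EE\int_0^T|f_{\eta,\ep}(X_\ep)|_{L^2}^2\,{\rm d}t\le C$ obtained by applying It\^o's formula to the potential $\phi(x)=\int_0^1 g_\ep(u)\,{\rm d}\xi$ with $g_\ep'=-f_{\eta,\ep}$. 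That choice of functional is what lets the paper get away with only $W_{A_\eta}\in C([0,T];H)$: the uniform sup-norm bound \eqref{eq:stima_e_k} on the eigenfunctions is consumed in estimating ${\rm Tr}(QD^2\phi)$, not in upgrading the spatial integrability of the convolution. Your route instead front-loads all the probabilistic work into proving $Z\in L^p(\Omega;C([0,T];L^p\times L^2))$; that is attainable here (factorization method plus \eqref{eq:stima_e_k}, noting that the off-diagonal entry of the matrix semigroup also smooths into $L^p$ through the heat kernel), but it is a nontrivial block that the paper's argument simply does not need. The Cauchy/convergence step is morally the same in both: a monotonicity inequality for the regularised cubics.

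The one place where your sketch underestimates the difficulty is the $L^6$-estimate in the presence of the linear coupling $-w$. Testing the first equation against $U|U|^4$ produces the term $\int_0^1|W|\,|U|^5\,{\rm d}\xi$, and since the second component lives only in $L^2(0,1)$ (there is no smoothing in the $w$-equation and $D(F)=L^6\times L^2$), this cannot be closed with $|W|_{L^2}$ and $|U|_{L^6}$ alone: you need something like $|W|_{L^2}\|U\|_{L^\infty}^2|U|_{L^6}^3$ together with the one-dimensional Sobolev embedding applied to $|U|^3$ (whose $H^1$-norm is controlled by the $-c\int|\partial_\xi U|^2|U|^4$ dissipation) and sixth moments of $|W|_{L^2}$. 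This can be made to work in dimension one, but it is exactly the obstruction that the paper's potential functional is designed to sidestep: there the coupling appears as $\langle V_\ep,f_{\eta,\ep}(U_\ep)\rangle\le\sigma|V_\ep|_{L^2}^2+\sigma^{-1}|f_{\eta,\ep}(U_\ep)|_{L^2}^2$ and is absorbed for large $\sigma$, yielding the space-time bound $U\in L^6([0,T]\times(0,1))$, which already suffices for the mild formulation. If you keep your test function, spell out the absorption of $\int|W|\,|U|^5$ explicitly; otherwise switch to the paper's functional. The remaining blocks (continuous dependence from dissipativity of $A+F$ via Gronwall, and the definition of generalised solutions by density of $D(F)$ in $H$) coincide with the paper's Step 1 and are fine as stated.
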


The proof will be given in Section \ref{sez:4}. Starting from this
result, we can introduce the transition semigroup $P_t : C_b(H) \to
C_b(H)$ associated to the flow $X(t,\cdot)$ defined in equation
\eqref{eq:abstract}, that is
\begin{equation}
  P_t \phi(x)=\EE \phi(X(t,x)),\quad \phi \in C_b(H), t\geq 0, x\in H.
\end{equation}
In Theorem~\ref{thm:mis_inv} we shall prove the existence of an
invariant measure for $P_t$. After that, we shall prove that the
associated Kolmogorov operator $N_0$ is dissipative in the space
$L^p(H; \mu)$ and that its closure is $m$-dissipative.
    
\section{Preliminary results}%Dissipative properties
\label{sec:diss}

Before we proceed with the analysis of the abstract stochastic
equation (\ref{eq:abstract}) it is necessary to study the properties
of the operators $A$ and $F$. 

\begin{lemma}\label{dissF}
  Set $\eta=\tfrac{1}{3}(\xi_1^2-\xi_1+1)$; then 
  \begin{equation*}
    F_\eta \left({u \atop w}\right) = \left({f(u) - \eta u \atop
    0}\right)
  \end{equation*}
  is $m$-dissipative, that is, it is dissipative and $I - F_\eta$ maps
  $D(F)$ onto $H$, i.e., ${\mathrm{Rg}}(I - F_\eta) = H$.
\end{lemma}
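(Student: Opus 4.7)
The plan is to reduce both assertions to scalar statements, using that the second component of $F_\eta$ vanishes identically. The starting observation is that the constant $\eta = \tfrac{1}{3}(\xi_1^2-\xi_1+1)$ has been chosen precisely so that $\eta = \max_{s\in\R} f'(s)$: a direct calculation with $f'(s) = -3s^2 + 2(1+\xi_1)s - \xi_1$ shows the maximum is attained at $s^* = (1+\xi_1)/3$ and equals $\eta$. Consequently $g(s) := f(s) - \eta s$ satisfies $g'(s)\le 0$ everywhere, so $g$ is non-increasing on $\R$.

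For dissipativity, I would take $X_i = (u_i, w_i) \in D(F)$ and, using the weighted scalar product on $H$ together with the vanishing of the second component of $F_\eta$, compute
\begin{equation*}
\left\langle F_\eta X_1 - F_\eta X_2,\, X_1 - X_2 \right\rangle_H
= \gamma \int_0^1 \bigl(g(u_1(\xi)) - g(u_2(\xi))\bigr)\bigl(u_1(\xi) - u_2(\xi)\bigr)\,\mathrm{d}\xi \le 0,
\end{equation*}
the inequality being pointwise from monotonicity of $g$.

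For the range condition, given $Y = (g_1,g_2)\in H$ I would set $w := g_2 \in L^2(0,1)$, which disposes of the second component. The first component of $(I-F_\eta)X = Y$ becomes the pointwise cubic equation $\phi(u(\xi)) = g_1(\xi)$ with $\phi(s) := (1+\eta)s - f(s) = s^3 - (1+\xi_1)s^2 + (1+\xi_1+\eta)s$. A short discriminant computation, using the explicit value of $\eta$, reduces the discriminant of $\phi'(s) = 3s^2 - 2(1+\xi_1)s + (1+\xi_1+\eta)$ to $4\bigl[(1+\xi_1)^2 - 3(1+\xi_1+\eta)\bigr] = -12$, so $\phi' > 0$ on $\R$ and $\phi:\R\to\R$ is a $C^\infty$-diffeomorphism. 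I would then define $u(\xi) := \phi^{-1}(g_1(\xi))$, which is measurable since $\phi^{-1}$ is continuous.

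The step needing a little care, and the only non-algebraic one, is to verify that this candidate $u$ lies in $L^6(0,1)$, so that $X = (u,w) \in D(F)$. Since $\phi$ is a monic cubic, the elementary growth bound $|\phi^{-1}(t)| \le C(1+|t|^{1/3})$ yields $|u(\xi)|^6 \le C'(1+|g_1(\xi)|^2)$, which is integrable because $g_1 \in L^2(0,1)$. I do not anticipate any serious functional-analytic obstacle: the entire argument is a mixture of polynomial calculus and pointwise inversion, and condition \eqref{eq:1} of Hypothesis \ref{hp:1} (relating $p$ and $\eta$) does not enter here, as it will only be needed once $A$ and $F_\eta$ are combined.
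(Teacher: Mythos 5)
Your proof is correct and follows essentially the same route as the paper: dissipativity from the observation that $\eta=\sup_{s\in\R}f'(s)$ makes $s\mapsto f(s)-\eta s$ non-increasing, and surjectivity by pointwise inversion of the strictly increasing cubic $(1+\eta)s-f(s)$. The only difference is that you spell out two details the paper leaves implicit, namely the discriminant computation showing $\phi'>0$ and the verification that $\phi^{-1}(g_1)$ actually lies in $L^6(0,1)$ so that the preimage belongs to $D(F)$.
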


\begin{proof}
  Let $x = \left({u \atop w}\right)$, $y = \left({v \atop z}\right)
  \in H$.  By definition,
  \begin{multline*}
    \langle F(x)-F(y) - \eta(x-y), x - y \rangle_H
    \\
    = \gamma \left\langle f(u)-f(v)-\eta(u-v), u-v
    \right\rangle_{L^2}
%%     \\
%%     &
    \leq \, \gamma \left(\sup_{r\in \R} f'(r) - \eta
    \right)\left|u-v\right|^2_{L^2}.
  \end{multline*}
  We note that $\sup\limits_{r\in \R} f'(r) =
  \tfrac{1}{3}(\xi_1^2-\xi_1+1)$; thus the last term in the previous
  inequality vanishes and $F_\eta$ is dissipative.
  
  Let us show that $I - F_\eta$ is surjective. In fact, observe that
  its first component $-f(u)+(\eta+1)u$ a polynomial of degree 3 with
  positive derivative. Hence it is invertible. Its second component is
  the identity and, obviously, invertible. This concludes the proof.
\end{proof}

\begin{remark}
  We denote $f_\eta$ the first component of $F_\eta$. Setting $\xi_0
  =(1+\xi_1)/3$, $f_{\eta}(u)=f(u)-\eta u$ can be rewritten as
  \begin{equation*}
    f_{\eta}(u)=-(u-\xi_0)^3 - \xi_0^3.
  \end{equation*}
\end{remark}

Let us introduce the notation $A_\eta=A+\eta \left({I \atop 0}\right)$
and $F_{\eta}$ as above; then we may rewrite equation
\eqref{eq:abstract} as
\begin{equation}\label{eq:abstract2}
  \begin{aligned}
    {\rm d}X(t)&= (A_\eta X(t)+F_\eta(X(t))) \, {\rm d}t + \sqrt{Q} \,
    {\rm d}W(t)
    \\
    X(0)&=x\in H
  \end{aligned}
\end{equation}

\begin{lemma}\label{prop:dissA}
  $A_{\eta}$ is $m$-dissipative
  and in particular, there exist $\omega_1,\omega_2>0$ such that
  \begin{align}
    &\left\langle A_{\eta} x,x\right\rangle \leq -\omega_1 |x|^2_H
    \label{dissAH}
    \\
    &\left\langle A_{\eta} x,x\right\rangle \leq -\omega_2 \|x\|^2_V
    \label{dissAV}.
  \end{align}
\end{lemma}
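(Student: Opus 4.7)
The plan is to establish the two dissipativity estimates by direct computation, taking full advantage of the $\gamma$-weighting of the inner product on $H$, and then to verify $m$-dissipativity by reducing the resolvent equation to a scalar elliptic problem on $(0,1)$.

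I would first pick $x = \left({u \atop w}\right) \in D(A_\eta) = D(A)$ and expand
\begin{equation*}
  \langle A_\eta x, x \rangle_H = \gamma \, \langle A_0 u + \eta u - w, u \rangle_{L^2} + \langle \gamma u - \alpha w, w \rangle_{L^2}.
\end{equation*}
The crucial observation is that the cross contributions $-\gamma \langle w, u \rangle_{L^2} + \gamma \langle u, w \rangle_{L^2}$ cancel exactly, which is precisely the point of the $\gamma$-weighting in the inner product on $H$. What survives is $\gamma \langle A_0 u, u \rangle_{L^2} + \gamma \eta |u|^2_{L^2} - \alpha |w|^2_{L^2}$. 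Integrating by parts with the Neumann boundary conditions encoded in $D(A_0)$, the first term becomes $-\gamma \int_0^1 c(\xi) |\partial_\xi u|^2 \, d\xi - \gamma \int_0^1 p(\xi) u^2 \, d\xi$ (the zero-order $-p(\xi) u$ contribution from \eqref{eq:mot_ex} being understood as part of $A$).

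Putting these pieces together and using $c(\xi) \ge c > 0$, $p(\xi) \ge p$, I obtain
\begin{equation*}
  \langle A_\eta x, x \rangle_H \le -\gamma c \, |\partial_\xi u|^2_{L^2} - \gamma (p - \eta)|u|^2_{L^2} - \alpha |w|^2_{L^2}.
\end{equation*}
Since $p - \eta \ge 0$ by condition \eqref{eq:1}, the inequality \eqref{dissAH} is immediate upon discarding the non-positive gradient term, with $\omega_1 = \min(p - \eta, \alpha)$; keeping all three negative terms and comparing with $\|x\|^2_V = \gamma(|u|^2_{L^2} + |\partial_\xi u|^2_{L^2}) + |w|^2_{L^2}$ gives \eqref{dissAV} with $\omega_2 = \min(c, p - \eta, \alpha)$. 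Both constants are strictly positive when \eqref{eq:1} is strict.

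For the $m$-dissipativity I would show that $\mathrm{Rg}(\lambda I - A_\eta) = H$ for some sufficiently large $\lambda > 0$. Given data $\left({f \atop g}\right) \in H$, the second component of the resolvent equation yields $w = (\lambda + \alpha)^{-1}(\gamma u - g)$, and substituting into the first component reduces the problem to the scalar elliptic equation
\begin{equation*}
  \bigl(\lambda - \eta + \gamma(\lambda + \alpha)^{-1}\bigr) u - A_0 u = f - (\lambda + \alpha)^{-1} g
\end{equation*}
on $(0,1)$ with Neumann boundary conditions. Since $-A_0$ is self-adjoint and bounded below with compact resolvent on $L^2(0,1)$, this is uniquely solvable in $D(A_0)$ for $\lambda$ large enough by Lax--Milgram; combined with the recovery formula for $w$, this produces the required preimage in $D(A)$. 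The main subtlety I expect is the boundary case of equality in \eqref{eq:1}: then the pure $|u|^2_{L^2}$ control in the estimate above is lost and recovering \eqref{dissAH} requires a Poincar\'e--Wirtinger argument on the mean-zero part of $u$, exploiting the $|\partial_\xi u|^2_{L^2}$ contribution that was discarded. Under strict inequality this complication disappears.
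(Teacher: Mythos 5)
Your proof follows essentially the same route as the paper's: a direct computation of the quadratic form in which the cross terms cancel thanks to the $\gamma$-weighting of the inner product, the same constants $\omega_1=\min\{p-\eta,\alpha\}$ and $\omega_2=\min\{c,p-\eta,\alpha\}$, and surjectivity of the resolvent obtained by eliminating $w$ and solving a scalar Neumann problem for $u$ (the paper does this at $\lambda=1$ by invoking the $m$-dissipativity of $A_0$ rather than Lax--Milgram for large $\lambda$, which is equivalent for a dissipative operator). Your remark that strict positivity of $\omega_1,\omega_2$ is lost when equality holds in \eqref{eq:1} is well taken --- the paper silently assumes $p>\eta$ here --- though note that for constant $u$ with $w=0$ even the Poincar\'e--Wirtinger device cannot restore \eqref{dissAH} in that borderline case, so strictness of \eqref{eq:1} is genuinely needed.
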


\begin{proof}
  First of all, we observe that the operator $A_0$ satisfies the
  inequality:
  \begin{equation*}
    \langle \partial_\xi(c \, \partial_\xi u), u \rangle_{L^2} \leq 0.
  \end{equation*}
  In fact, with $c=\min_{[0,1]} c(\xi)$, we have:
  \begin{multline*}
    \int_0^1 \partial_\xi(c(\xi) \, \partial_\xi u(\xi)) u(\xi) \,
    {\rm d}\xi
    \\
    = c(\xi)u(\xi)\partial_{\xi}u(\xi) \Big|_0^1 - \int_0^1
    c(\xi)(\partial_{\xi}u(\xi))^2 \, {\rm d}\xi \leq
    -c\left|Du\right|^2_{L^2}\leq 0.
  \end{multline*}
  Now set $p=\min\limits_{[0,1]}p(\xi)>0$ and $\omega_1=\min \left\{ p-\eta,
    \alpha \right\}$. For $x = \left({u \atop v}\right)$ we have
  \begin{multline*}
    \left\langle Ax,x\right\rangle
%%     \left\langle 
%%       \left(
%%         \begin{matrix}
%%           \Delta_N -p(\xi) & -1\\
%%           \gamma & -\alpha
%%         \end{matrix}
%%       \right)
%%       \left(
%%         \begin{matrix}
%%           u \\ v
%%         \end{matrix}
%%       \right),
%%       \left(
%%         \begin{matrix}
%%           u \\ v
%%         \end{matrix}
%%       \right)
%%     \right\rangle =\\
    \le \gamma \langle \partial_\xi(c \, \partial_\xi u), u
    \rangle_{L^2} - \gamma (p-\eta)\left|u\right|^2_{L^2} - \gamma \langle u,
    v \rangle + \gamma \langle u,v \rangle - \alpha |v|^2_{L^2}
    \\
    \leq - \gamma (p-\eta) |u|^2_{L^2} -
    \alpha |v|_{L^2}^2 \le -\omega_1 \left|x\right|^2_H.
  \end{multline*}
  This proves \eqref{dissAH}.
      
  As \eqref{dissAV} is concerned, we have
  \begin{equation*}
    \left\langle Ax,x\right\rangle \leq -c
    \gamma\left|Du\right|^2_{L^2} - \gamma (p-\eta)\left|u\right|^2_{L^2}
    -\alpha\left|v\right|^2_{L^2} \leq -\omega_2
    (\gamma\left|u\right|_{H^1}+\left|v\right|^2)=-\omega_2
    \left\|x\right\|_V^2
  \end{equation*}
  for $\omega_2=\min\left\{c,p-\eta,\alpha \right\}$.
  
  Now let us show the $m$-dissipativity.We need to prove that 
  $I-A_{\eta}$ is surjective. Fix $x_0=(u_0,v_0)\in H$ and let we consider the following equation 
  \begin{equation*}
     \begin{cases}
         u-A_0u+v=u_0\\
         v-\gamma u+\alpha v=v_0.
     \end{cases}
  \end{equation*}
  Note that the second equality can be rewritten as 
  \begin{align}\label{eq:m-dissA}
      v=\dfrac{1}{1+\alpha}v_0+\dfrac{\gamma}{1+\alpha}u;
  \end{align}
  then, substituting $v$ with the right member of \eqref{eq:m-dissA} we obtain
  \begin{equation*}
      \left[\left(1-\dfrac{\gamma}{1+\gamma}\right)I-A_0\right]u=u_0-\dfrac{1}{1+\alpha}v_0.
  \end{equation*}
  Using the $m$-dissipativity of $A_0$ and (see for instance
  \cite{ouhabaz}) we obtain that previous equation admits a solution
  $u\in L^2(0,1)$.  We can then compute $v$ by means of
  \eqref{eq:m-dissA}.  It follows that for every $x_0$ there exists
  $x=(u,v)$ such that $(I-A_\eta)x = x_0$, that is $A_\eta$ is
  $m$-dissipative.
\end{proof}

From the above result it follows that $A_\eta$ is the infinitesimal
generator of a $C_0$ semigroup of contractions. Further, the following
holds.

\begin{proposition}\label{prop:genA}
  $A_\eta$ generates an analytic $C_0$-semigroup of contractions
  $e^{tA_\eta}$ on $H$ and it is of negative type.
\end{proposition}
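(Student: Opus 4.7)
The plan is to separate the three conclusions (contractive $C_0$-semigroup, analyticity, negative type) and derive each from a different ingredient we already have, then glue them together.

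First, the fact that $A_\eta$ generates a $C_0$-semigroup of contractions on $H$ is essentially a direct corollary of Lemma \ref{prop:dissA}: since $D(A_\eta) = D(A)$ is dense in $H$ and $A_\eta$ is $m$-dissipative, the Lumer--Phillips theorem applies and yields a contractive $C_0$-semigroup $e^{tA_\eta}$. For the negative type, the sharper estimate \eqref{dissAH} shows that $A_\eta + \omega_1 I$ is still dissipative, so $|e^{tA_\eta}x|_H \leq e^{-\omega_1 t}|x|_H$.

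For analyticity, the natural move is a splitting $A_\eta = L + B$ in which $L$ is sectorial by self-adjointness and $B$ is a bounded perturbation that cannot destroy analyticity. Concretely I would take
\begin{equation*}
  L = \begin{pmatrix} A_0 - pI + \eta I & 0 \\ 0 & -\alpha I \end{pmatrix},
  \qquad
  B = \begin{pmatrix} 0 & -I \\ \gamma I & 0 \end{pmatrix},
\end{equation*}
(so that $A_\eta x - Lx - Bx$ collects only the bounded multiplicative discrepancy $-(p(\xi)-p)u$, which I would absorb into $B$). The operator $A_0$ is self-adjoint and non-positive on $L^2(0,1)$ with Neumann boundary conditions, while $-pI + \eta I$ and $-\alpha I$ are bounded self-adjoint perturbations; hence each diagonal block is self-adjoint and bounded above on $L^2(0,1)$, so it generates an analytic contraction semigroup there. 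Taking direct sums, $L$ generates an analytic semigroup on $H$. Since $B$ is bounded on $H$, the bounded-perturbation theorem for analytic semigroups (e.g.\ Pazy, Ch.~3) gives that $A_\eta = L + B$ still generates an analytic $C_0$-semigroup.

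The proof finishes by combining the two results: Lumer--Phillips gives contractivity and exponential decay with rate $\omega_1$, while the perturbation argument gives analyticity; together they say that $e^{tA_\eta}$ is an analytic $C_0$-semigroup of contractions of negative type. I expect no serious obstacle: the only slightly delicate point is making sure the off-diagonal/bounded perturbation $B$ does not interfere with the contractivity (since bounded perturbations generally inflate the semigroup bound by $e^{t\|B\|}$). This is harmless here because contractivity is established independently from the $m$-dissipativity of the full $A_\eta$, not from the decomposition.
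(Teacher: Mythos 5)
Your proof is correct and follows essentially the same route as the paper: the paper also obtains analyticity by viewing $A_\eta$ as an operator matrix whose diagonal entries ($A_0$ and $-\alpha I$) generate analytic semigroups and whose remaining entries are bounded (citing Nagel's matrix theory rather than spelling out the bounded-perturbation argument as you do), and it likewise derives contractivity and the negative type from the dissipativity estimates of Lemma \ref{prop:dissA}. Your version is simply a more explicit rendering of the same decomposition, and your closing remark about why the bounded perturbation cannot spoil contractivity is a sensible clarification.
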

    
\begin{proof}
  Note that $A_0$ and $-\alpha I$ generate analytic semigroups on
  $L^2(0,1)$ while $\gamma I$ is a bounded linear operator on the same
  space. Thus, the proof easily follows by applying the results in
  \cite[Section 4]{nagel/1989}.  Moreover, the dissipativity condition (\ref{eq:1})
  implies that $\left\|e^{tA_{\eta}}\right\|\leq e^{-\omega t}$, that
  is, $A_{\eta}$ is of negative type.
\end{proof}

%% Further property of $A$, and in particular a generation result for
%% this operator, will be stated below. 

For the moment, we notice that 
from the above lemmata % \ref{prop:dissA}, \ref{dissF} 
we obtain the dissipativity of the sum $A_\eta + F_\eta$.

\begin{lemma}\label{le:3}
  Recall assumption (\ref{eq:1}), that we can write as
  \begin{equation}\label{hp_costanti}
    p-\eta \geq 0
  \end{equation} 
  where $p=\min\limits_{[0,1)} |p(\xi)|$. Then $A_\eta + F_\eta = A +
  F$ is dissipative.
\end{lemma}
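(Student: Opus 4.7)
The plan is simply to combine the two dissipativity results already established in the preceding lemmas. The first step is to observe the identity $A+F = A_\eta + F_\eta$, which holds by construction: the extra $+\eta u$ inserted in the first component of $A_\eta$ exactly cancels the $-\eta u$ subtracted inside $f_\eta$. Consequently, for any $x,y \in D(F)$,
\begin{equation*}
  \langle (A+F)x - (A+F)y,\, x-y\rangle_H = \langle A_\eta(x-y),\, x-y\rangle_H + \langle F_\eta(x) - F_\eta(y),\, x-y\rangle_H.
\end{equation*}

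Next I would estimate the two terms separately. Since $A_\eta$ is linear, Lemma \ref{prop:dissA} gives $\langle A_\eta(x-y),\, x-y\rangle_H \leq -\omega_1 |x-y|^2_H$ with $\omega_1 = \min\{p-\eta,\,\alpha\}$; assumption (\ref{hp_costanti}), together with $\alpha > 0$, guarantees $\omega_1 \geq 0$, so this term is nonpositive. The nonlinear contribution is handled by Lemma \ref{dissF}, which yields directly $\langle F_\eta(x) - F_\eta(y),\, x-y\rangle_H \leq 0$. Summing the two bounds produces the desired dissipativity of $A+F$.

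No serious obstacle is anticipated: the lemma is essentially a bookkeeping step that packages the two previous dissipativity estimates. The one point that deserves verification is that the threshold $\eta = \tfrac{1}{3}(\xi_1^2 - \xi_1 + 1)$, singled out precisely so that $F_\eta$ is dissipative, remains compatible with the linear dissipativity inherited by $A_\eta$; it is exactly this compatibility that hypothesis (\ref{hp_costanti}) is designed to encode, so once the decomposition $A+F=A_\eta+F_\eta$ is written down the result is immediate.
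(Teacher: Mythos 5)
Your proposal is correct and follows essentially the same route as the paper: both use the identity $A+F=A_\eta+F_\eta$, bound the linear part by Lemma \ref{prop:dissA} (with $\omega_1=\min\{p-\eta,\alpha\}\geq 0$ thanks to \eqref{hp_costanti}) and discard the nonlinear part via the dissipativity of $F_\eta$ from Lemma \ref{dissF}. If anything, your version is slightly more careful, since you check the dissipativity inequality for a general pair $x,y$ rather than only against $y=0$ as the paper's displayed computation does.
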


\begin{proof}
  Observe that 
  \begin{multline*}
    \left\langle (A+F)x, x \right\rangle = \langle (A_{\eta}+F_\eta) x, x \rangle_H \leq - \gamma p |u|^2_{L^2} - \alpha
    |v|^2_{L^2} + \gamma \eta |u|^2_{L^2}
    \\
    \leq -\min\left\{p-\eta,\alpha\right\} |x|^2_H;
  \end{multline*}
  thus the dissipativity condition is satisfied if $p \geq \eta$.
\end{proof}

%\begin{remark}\label{hp:dissA+F}
%%   From now on we assume that the phenomenological constants satisfy
%%   inequality \eqref{hp_costanti}. Further we set
Setting $\omega=\min\left\{p-\eta,\alpha\right\}$, the statement of
Lemma \ref{le:3} can be rewritten as
  \begin{equation*}
    \langle (A+F)x, x \rangle_H \leq -\omega |x|^2_H.
  \end{equation*}
%\end{remark}

\subsection{An approximating problem}

In this section we show an existence and uniqueness result for a
family of approximating problems of system \eqref{eq:abstract2} with a
Lipschitz continuous nonlinearity.  Consider, for any $\ep >0$, the
following approximation of $F_{\eta}$, $F_{\eta,\ep}$, given as
\begin{equation*}
  F_{\eta,\ep}\left({u \atop v}\right) = \left({ f_{\eta,\ep}(u) \atop
      0} \right), \qquad f_{\eta,\ep}(u) = \dfrac{f(u)-\eta
    u}{1+\ep(1-\xi_0(u-\xi_0)+ (u-\xi_0)^2)}.
\end{equation*}
It is easily seen that $F_{\eta,\ep}$ is Lipschitz continuous and
\begin{equation*}
  \left|F_{\eta,\ep}(x)-F_{\eta}(x)\right|_{H} \to 0, \quad x\in
  L^6(0,1) \times L^2(0,1)
\end{equation*}
when $\ep \to 0$. 
Moreover it easy to see that
\begin{equation}\label{eq:crescita-pol-F_eta}
  \left|F_{\eta}(x)\right|_H \leq C(1+\left|x\right|_H^3), \quad x\in D(F),
\end{equation}
for suitable $C>0$.
    
Hence, for $\ep>0$, we are concerned with the family of equations
\begin{equation}\label{eq:approx}
  \begin{aligned}
    {\rm d}X(t)&= (A_\eta X(t)+F_{\eta,\ep}(X(t))) \, {\rm d}t +
    \sqrt{Q} \, {\rm d}W(t)
    \\
    X(0)&=x\in H
  \end{aligned}
\end{equation}
which can be seen as an approximating problem of \eqref{eq:abstract}.

There exists a well established theory on stochastic evolution
equations in Hilbert spaces, see Da Prato and Zabcyck
\cite{dpz:stochastic}, that we shall apply in order to show that for
any $\ep>0$ Equation (\ref{eq:approx}) admits a unique solution
$X_{\ep}(t)$. Let us recall from Proposition \ref{prop:genA} that
$A_{\eta}$ is the infinitesimal generator of a strongly continuous
semigroup $e^{tA_{\eta}}$, $t\geq 0$, on $H$; we also claim that the
following inequality hold:
%
%% Recalling some known result on stochastic evolution equation (see for
%% example \emph{da Prato}), in the next pages we are going to show that
%% for any $\ep>0$ it admits a unique solution $X_{\ep}(t,x)$ defined on
%% a suitable space\emph{Da Prato}.  These results will be applied in the
%% next section to show the existence and uniqueness of solutions for
%% equation \eqref{eq:abstract}.
%    
%% First of all, we notice that, in order to solve the evolution problems
%% \eqref{eq:abstract2} and (\ref{eq:approx}), one must check that
%% $A_{\eta}$ is the infinitesimal generator of a strongly continuous
%% semigroup $e^{tA_{\eta}}$, $t\geq 0$, on $H$. Also, the following
%% inequality must hold:
\begin{equation}\label{eq:traccia}
  \int_0^t {\rm Tr}[e^{sA_{\eta}}Qe^{sA^*_{\eta}}] \, {\rm d}s
  <\infty, \quad \forall t\geq 0.
\end{equation}
(see below). If these properties are satisfied,
then the so-called stochastic convolution process
\begin{equation*}
  W_{A_\eta}(t)=\int_0^t e^{(t-s)A_{\eta}} \sqrt{Q} \, {\rm d}W(t)
\end{equation*}
is a well-defined mean square continuous, $\mathcal{F}_t$-adapted
Gaussian process (see \cite[Theorem 5.2]{dpz:stochastic}) and we can
give the following

\begin{definition}
  Given a $\mathcal{F}_t$-adapted cylindrical Wiener process on
  probability space $(\Omega, \mathcal{F},
  \left\{\mathcal{F}_t\right\}, \mathbb{P})$ a process $X(t)$, $t\geq
  0$, is a mild solution of \eqref{eq:approx} if it satisfies
  ${\mathbb P}$-a.s. the following integral equation
  \begin{equation}\label{eq:int}
    X(t)= e^{tA_{\eta}}x + \int_0^t e^{(t-s)A_{\eta}} F_{\eta,\ep}(X(s))
    \, {\rm d}s + \int_0^t e^{(t-s)A_{\eta}} \sqrt{Q} \, {\rm d}W(t).
  \end{equation}
\end{definition} 

Let us check that in our assumptions, condition \eqref{eq:traccia}
holds.

\begin{proposition}\label{prop:traccia}
  $A_{\eta}$ and $Q$ satisfy the following inequality:
  \begin{equation*}
    \sup_{t \ge 0} \int_0^{t} {\rm Tr}[e^{sA_{\eta}}Qe^{sA^*_{\eta}}]
    \, {\rm d}s < \infty.
  \end{equation*}
\end{proposition}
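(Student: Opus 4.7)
My plan is to obtain a uniform bound on the integrand that is exponentially decaying, using only the two ingredients supplied by earlier results: (i) by Proposition \ref{prop:genA}, $A_\eta$ is of negative type, hence there exists $\omega>0$ such that $\|e^{sA_\eta}\|_{\mathcal L(H)}\le e^{-\omega s}$ for every $s\ge0$; (ii) by Hypothesis \ref{hp:1}, $Q$ is a non-negative, self-adjoint, trace class operator on $H$, because, using the weighted inner product, an orthonormal basis of $H$ is given by $\{(\gamma^{-1/2}e_k,0)\}\cup\{(0,e_k)\}$ and a direct computation yields $\mathrm{Tr}_H(Q)=\sum_k\lambda_k^1+\sum_k\lambda_k^2<\infty$.

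The key algebraic step is the standard estimate
\begin{equation*}
  \mathrm{Tr}(TST^*)=\|T\sqrt{S}\|_{HS}^2\le\|T\|^2\,\mathrm{Tr}(S),
\end{equation*}
valid whenever $S\ge0$ is trace class and $T$ is bounded. Indeed, writing $S=\sqrt{S}\sqrt{S}$, one has $\mathrm{Tr}(TST^*)=\sum_k\|T\sqrt{S}f_k\|_H^2\le\|T\|^2\sum_k\|\sqrt{S}f_k\|_H^2=\|T\|^2\mathrm{Tr}(S)$, for any orthonormal basis $\{f_k\}$ of $H$.

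Applying this with $T=e^{sA_\eta}$ and $S=Q$ gives
\begin{equation*}
  \mathrm{Tr}\bigl[e^{sA_\eta}Qe^{sA^*_\eta}\bigr]\le e^{-2\omega s}\,\mathrm{Tr}(Q),
\end{equation*}
so integration yields
\begin{equation*}
  \int_0^t\mathrm{Tr}\bigl[e^{sA_\eta}Qe^{sA^*_\eta}\bigr]\,\mathrm{d}s\le\frac{\mathrm{Tr}(Q)}{2\omega}<\infty,
\end{equation*}
uniformly in $t\ge0$, which is the claim.

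There is no genuine obstacle here; the only points requiring a moment of care are the use of the weighted inner product on $H$ in checking that $Q$ remains trace class, and the correct form of the bound $\mathrm{Tr}(TST^*)\le\|T\|^2\mathrm{Tr}(S)$ (for which the factorisation through $\sqrt{S}$ and the Hilbert--Schmidt norm is the cleanest route). The exponential decay furnished by Proposition \ref{prop:genA} is what makes the bound uniform in $t$, rather than merely finite for each fixed $t$.
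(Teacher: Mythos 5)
Your proof is correct and follows essentially the same route as the paper: bound $\mathrm{Tr}[e^{sA_\eta}Qe^{sA_\eta^*}]$ by $\|e^{sA_\eta}\|^2\,\mathrm{Tr}(Q)\le e^{-2\omega s}\mathrm{Tr}(Q)$ and integrate the exponential. If anything, your justification of the trace inequality via $\mathrm{Tr}(TST^*)=\|T\sqrt S\|_{HS}^2\le\|T\|^2\mathrm{Tr}(S)$ is cleaner than the paper's, which invokes the ``self-adjointness of $A_\eta$'' (not actually true for this operator matrix, with its off-diagonal coupling), whereas your argument needs no such assumption; your check that $Q$ remains trace class for the weighted inner product on $H$ is also a point the paper glosses over.
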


\begin{proof}
  Recall that if $S,T$ are linear operators
  defined on an Hilbert space H such that $S\in \mathcal{L}(H)$ and
  $T$ is of trace class, then
  \begin{equation}\label{dis:traccia}
    {\rm Tr}(ST) = {\rm Tr}(TS) \leq \left\|S\right\|_{\mathcal{L}(H)}
    {\rm Tr}(T). 
  \end{equation}
  Taking into account the self-adjointness of $A_{\eta}$ and the above
  remark we obtain
  \begin{equation*}
    {\rm Tr}[e^{tA_{\eta}}Qe^{tA^*_{\eta}}]\leq {\rm Tr}(Q)
    \left\|e^{tA_{\eta}}\right\|^2_{\mathcal{L}(H)} \leq {\rm Tr }(Q)
    e^{-2\omega t},
  \end{equation*}
  hence
  \begin{equation*}
    \int_0^{\infty} {\rm Tr}[e^{sA_{\eta}}Qe^{sA^*_{\eta}}] \, {\rm
      d}s \leq \int_0^{\infty} {\rm Tr}(Q) \, e^{-2\omega s} \, {\rm d}s <
    \infty.
  \end{equation*}
\end{proof}
    
\begin{proposition}
\label{pr_continuity_of_W_A}
  The stochastic convolution is $\P$-almost surely continuous on
  $[0,\infty)$ and it verifies the following estimate
  \begin{align}\label{eq:stimaW1}
    \EE \sup_{t \geq 0} \left|W_{A_{\eta}}(t)\right|^{2m}_H \leq C
  \end{align} 
  for some positive constant $C$.
\end{proposition}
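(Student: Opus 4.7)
The approach is the factorization method of Da Prato--Kwapie\'n--Zabczyk. Fix $\alpha\in(0,1/2)$ and introduce the auxiliary Gaussian process
\begin{equation*}
Y(s)=\int_0^s (s-r)^{-\alpha} e^{(s-r)A_\eta}\sqrt{Q}\,{\rm d}W(r);
\end{equation*}
the stochastic Fubini theorem then yields the factorized representation
\begin{equation*}
W_{A_\eta}(t)=\frac{\sin(\pi\alpha)}{\pi}\int_0^t (t-s)^{\alpha-1}e^{(t-s)A_\eta}Y(s)\,{\rm d}s.
\end{equation*}

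First I would estimate $Y$ in $L^{2m}$. Since $Y(s)$ is centred Gaussian in $H$, hypercontractivity gives $\EE|Y(s)|_H^{2m}\le c_m(\EE|Y(s)|_H^2)^m$, and Ito's isometry together with the contractivity $\|e^{tA_\eta}\|\le e^{-\omega t}$ from Proposition \ref{prop:genA} (exactly as in the proof of Proposition \ref{prop:traccia}) yields
\begin{equation*}
\EE|Y(s)|_H^2\le {\rm Tr}(Q)\int_0^s(s-r)^{-2\alpha}e^{-2\omega(s-r)}\,{\rm d}r\le {\rm Tr}(Q)\int_0^\infty r^{-2\alpha}e^{-2\omega r}\,{\rm d}r,
\end{equation*}
which is finite thanks to $2\alpha<1$ and, crucially, independent of $s\ge 0$. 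Choosing $m$ large enough that $\alpha>1/(2m)$, H\"older's inequality shows that the deterministic convolution $f\mapsto \int_0^{\cdot}(\cdot-s)^{\alpha-1}e^{(\cdot-s)A_\eta}f(s)\,{\rm d}s$ is bounded from $L^{2m}([0,T];H)$ into $C([0,T];H)$; applied to $f=Y$, which lies in $L^{2m}$ along $\P$-almost every trajectory by the previous bound and Fubini, this gives continuity of $W_{A_\eta}$ on every $[0,T]$ and hence on $[0,\infty)$.

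For the moment estimate I would split the kernel as $(t-s)^{\alpha-1}e^{-\omega(t-s)}=(t-s)^{\alpha-1}e^{-\omega(t-s)/2}\cdot e^{-\omega(t-s)/2}$ and apply H\"older with conjugate exponents $p$, $q=2m$; the condition $\alpha>1/(2m)$ ensures $(\alpha-1)p>-1$, so the first factor produces the finite constant $\int_0^\infty r^{(\alpha-1)p}e^{-\omega pr/2}\,{\rm d}r$, uniformly in $t$, leaving
\begin{equation*}
|W_{A_\eta}(t)|_H^{2m}\le C\int_0^t e^{-\omega m(t-s)}|Y(s)|_H^{2m}\,{\rm d}s.
\end{equation*}
Taking expectation and using the uniform bound of Step 1 already yields $\sup_{t\ge 0}\EE|W_{A_\eta}(t)|_H^{2m}<\infty$. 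To bring the supremum inside the expectation, the plan is to partition $[0,\infty)$ into unit intervals $[n,n+1]$, apply the factorization estimate on each (with constants independent of $n$ by translation invariance of the Wiener increments) and then sum the contributions via the exponential decay of $e^{tA_\eta}$ as a convergent geometric series. This last step is the main obstacle, and it is precisely where the \emph{strict} negative type of $A_\eta$ obtained in Proposition \ref{prop:genA} is essential: without the exponential contractivity, the constant in the factorization bound over $[0,T]$ would blow up with $T$ and no global estimate would be available.
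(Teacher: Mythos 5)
Your argument is the factorization method, which is exactly what the paper invokes: its proof checks $\int_0^\infty s^{-\alpha}\,{\rm Tr}[e^{sA_\eta}Qe^{sA_\eta^*}]\,{\rm d}s<\infty$ (uniformly in the upper limit, via $\|e^{tA_\eta}\|\le e^{-\omega t}$) and then cites \cite[Theorem 5.2.6]{dpz:ergodicity}, whose proof is precisely the computation you spell out. Your derivation of the pathwise continuity and of the bound $\sup_{t\ge 0}\EE|W_{A_\eta}(t)|_H^{2m}<\infty$ is correct (with the minor standard caveat that one needs $m>1/(2\alpha)$, so for small $m$ one first proves the higher-moment bound and then uses Jensen).

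The step you flag as ``the main obstacle'' is indeed where the argument must stop, but the problem is in the statement, not in your proof. Your proposed fix cannot work: the terms $\EE\sup_{t\in[n,n+1]}|W_{A_\eta}(t)|_H^{2m}$ do not decay in $n$ --- the law of $W_{A_\eta}(t)$ converges to a nondegenerate Gaussian measure, so these terms tend to a strictly positive constant and the series over $n$ diverges; the exponential decay of $e^{tA_\eta}$ damps the initial datum, not the continually injected noise. In fact the literal claim $\EE\sup_{t\ge 0}|W_{A_\eta}(t)|_H^{2m}\le C$ is false already for the scalar Ornstein--Uhlenbeck process, whose supremum over $[0,\infty)$ is almost surely infinite by law-of-the-iterated-logarithm growth. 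What is true, what the cited theorem delivers (namely $\EE\sup_{t\in[0,T]}$ with a $T$-dependent constant, plus $\sup_{t\ge 0}\EE$ with a uniform one), and what the paper actually uses later --- see the line ``by \eqref{eq:stimaW1}, $\sup_{t\ge 0}\EE|W_{A_\eta}(t)|^{2m}<C_m$'' in the proof of Lemma \ref{lemma:stima_potenza2m} --- is the version with the supremum outside the expectation, which your argument does establish. So, modulo this misprint in the statement, your proposal is correct and coincides with the paper's route.
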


\begin{proof}
  Note that for any $\alpha \in (0,1)$ it holds
  \begin{align*}
    \int_0^{\infty} s^{-\alpha} {\rm Tr}[e^{sA_{\eta}}Q
    e^{sA_{\eta}^*}] \, {\rm d}s <\infty.
  \end{align*}
  In fact,
  \begin{multline*}
    \int_0^{\infty} s^{-\alpha} {\rm Tr}[e^{sA_{\eta}}Q
    e^{sA_{\eta}^*}] \, {\rm d}s \leq {\rm Tr}(Q) \int_0^{\infty}
    s^{-\alpha} \left\|e^{sA_{\eta}}\right\|^2_{\mathcal{L}(H)} \,
    {\rm d}s
    \\
    \leq {\rm Tr}(Q) \, \int_0^{\infty} s^{-\alpha}e^{-2\omega_1 s} \,
    {\rm d}s <\infty.
  \end{multline*}
  Now the thesis follows by (\cite[Theorem 5.2.6]{dpz:ergodicity}.
\end{proof}
    
\begin{definition}
  Let $L^2_W(\Om; C([0,T];H))$ denote the Banach space of all
  $\mathcal{F}_t$-measurable, pathwise continuous processes, taking
  values in $H$, endowed with the norm
  \begin{equation*}
    \left\|X\right\|_{L^2_W(\Om;C([0,T];H))}=\left(\EE \sup_{t\in
      [0,T]}\left|X(t)\right|_H^2 \right)^{1/2}
  \end{equation*}
  while $L^2_W(\Om;L^2([0,T];V))$ denotes the Banach space of all
  mappings $X: [0,T] \to V$ such that $X(t)$ is
  $\mathcal{F}_t$-measurable, endowed with the norm
  \begin{equation*}
    \left\|X\right\|_{L^2_W(\Om;L^2([0,T];V))}=\left(\EE \int_0^T
      \left\|X(t)\right\|_V^2 \, {\rm d}t \right)^{1/2}.
  \end{equation*}
\end{definition}

With the above notation, Proposition \ref{pr_continuity_of_W_A}
implies that $W_A(t) \in L^2(\Omega; C([0,T];H))$ for arbitrary $T >
0$. Also, from Propositions \ref{prop:genA} and \ref{prop:traccia} it
follows that for $\ep > 0$ the approximating problems admit a unique
solution.

\begin{proposition}\label{prop:sol_approx}
  Let $x\in H$. Then, for any $\ep>0$ there exist a unique mild
  solution $X_{\ep}(t,x)$ to equation \eqref{eq:approx} such that
  \begin{equation*}
    X_{\ep}\in L^2_W(\Om;C([0,T];H))\cap L^2_W(\Om;L^2([0,T];V)).
  \end{equation*}
\end{proposition}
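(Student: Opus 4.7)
The plan is to apply the Banach fixed-point theorem to the integral equation \eqref{eq:int} viewed as $X = \Phi_\ep(X)$ on the Banach space $L^2_W(\Om; C([0,T]; H))$, where
\begin{equation*}
  \Phi_\ep(X)(t) = e^{tA_\eta}x + \int_0^t e^{(t-s)A_\eta} F_{\eta,\ep}(X(s))\,{\rm d}s + W_{A_\eta}(t).
\end{equation*}
Proposition \ref{prop:genA} ensures that $e^{tA_\eta}$ is a contraction semigroup on $H$, Proposition \ref{pr_continuity_of_W_A} ensures that $W_{A_\eta}$ belongs to $L^2_W(\Om; C([0,T]; H))$, and by construction $F_{\eta,\ep}$ is globally Lipschitz with at most linear growth (its denominator is quadratic in $u-\xi_0$ while the numerator is cubic). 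Together these facts imply that $\Phi_\ep$ sends $L^2_W(\Om; C([0,T]; H))$ into itself.

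The contraction property follows from the pointwise bound
\begin{equation*}
  \big|\Phi_\ep(X)(t)-\Phi_\ep(Y)(t)\big|_H \leq L_\ep \int_0^t |X(s)-Y(s)|_H\,{\rm d}s,
\end{equation*}
with $L_\ep$ the Lipschitz constant of $F_{\eta,\ep}$. Taking the supremum in $t\in[0,T]$ and the expectation one obtains a strict contraction either for $T$ small (and then extending by iteration on adjacent intervals), or on the whole interval after re-equipping the space with an equivalent norm weighted by $e^{-\beta t}$ for $\beta$ sufficiently large. Banach's theorem then yields a unique mild solution $X_\ep \in L^2_W(\Om; C([0,T]; H))$.

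The main remaining point, which does not follow from the fixed-point argument, is the regularity $X_\ep \in L^2_W(\Om; L^2([0,T]; V))$. The strategy is to apply It\^o's formula to $t\mapsto |X_\ep(t)|_H^2$, justified via the Yosida regularisation $A_\eta^n = n A_\eta(n-A_\eta)^{-1}$ (which turns the mild equation into a strong one) followed by a limit passage $n \to \infty$. This yields the energy identity
\begin{equation*}
  |X_\ep(t)|_H^2 = |x|_H^2 + 2\!\int_0^t\!\langle A_\eta X_\ep(s),X_\ep(s)\rangle\,{\rm d}s + 2\!\int_0^t\!\langle F_{\eta,\ep}(X_\ep(s)),X_\ep(s)\rangle\,{\rm d}s + t\,{\rm Tr}(Q) + M(t),
\end{equation*}
with $M$ a mean-zero local martingale. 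Inserting the coercivity estimate \eqref{dissAV}, absorbing the cross term via Young's inequality together with the linear growth of $F_{\eta,\ep}$, and then taking expectations, one arrives at
\begin{equation*}
  \omega_2\,\EE\int_0^T\|X_\ep(s)\|_V^2\,{\rm d}s \leq |x|_H^2 + C(1+T),
\end{equation*}
which delivers the required $V$-regularity. The main obstacle is precisely this energy estimate: a mild solution is not \textit{a priori} regular enough to justify the It\^o expansion directly, and the Yosida approximation plus limit passage is the technical heart of the argument.
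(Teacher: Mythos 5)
Your proof is correct and follows essentially the same route as the paper: the paper obtains the mild solution in $L^2_W(\Om;C([0,T];H))$ by citing the Da Prato--Zabczyk existence theorem for Lipschitz nonlinearities (which is exactly the contraction argument you spell out), and then derives the $V$-regularity by the same It\^o/energy computation on $|x|_H^2$ using the coercivity \eqref{dissAV} and Gronwall's lemma. The only cosmetic differences are that the paper exploits the sign condition $\langle F_{\eta,\ep}(x),x\rangle_H\le 0$ instead of Young's inequality for the nonlinear term, and justifies the formal It\^o expansion by a truncation argument rather than by Yosida approximation.
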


\begin{proof} %[Proof of proposition \ref{prop:sol_approx}]
  From \cite[Theorem 7.4]{dpz:stochastic} we have that for any $x\in
  H$ problem \eqref{eq:approx} has a unique mild solution
  $X_{\ep}(t,x)$ such that 
  \begin{equation*}
    \EE \sup_{t\in[0,T]} \left|X_{\ep}(t,x)\right|^p_H
    <C(1+\left|x\right|^p), \qquad p > 2,
  \end{equation*}
  which further admits a continuous modification; this proves that
  $X_{\ep} \in L^2_W(\Omega; C([0,T];H))$.  Now, we apply Ito's formula to
  the function $\phi(x)=\left|x\right|^2$ (although this is only
  formal, the following computations can be justified via a truncation
  argument) and we find that
  \begin{multline}\label{eq:Ito_app}
    \left|X_{\ep}(t,x)\right|^2 = \left|x\right|^2 + 2 \int_0^t
    \left\langle A_{\eta}X_{\ep}(s,x) + F_{\eta,\ep}(X(s,x)),
      X_{\ep}(s,x) \right\rangle \, {\rm d}s
    \\
    + 2 \int_0^t \left\langle X_{\ep}(s,x),\sqrt{Q} \, {\rm d}W(s)
    \right\rangle +{\rm Tr}(Q) \, t,
  \end{multline}
  where 
  \begin{equation*}
      \int_0^t \langle X_{\ep}(s,x),\sqrt{Q} \, {\rm d}W(s) \rangle
  \end{equation*}
  is a square integrable martingale such that, by \cite[Theorems 3.14
  and 4.12]{dpz:stochastic}, 
  \begin{equation*}
    \EE \sup_{t \in [0,T]} \left|\int_0^t \left\langle
        X_{\ep}(s,x),\sqrt{Q} \, {\rm d}W(s)\right\rangle \right| \leq
    3 {\rm Tr}(Q) \, %\left\|Q\right\|_{\mathcal{L}(H)}
    \EE \left(\int_0^T
      \left|X_{\ep}(s,x)\right|^2_H \, {\rm d}s\right).
  \end{equation*}
  Moreover we have
  \begin{equation*}
    \int_0^t \langle A_{\eta}X_{\ep}(s,x), X(s,x) \rangle \, {\rm
      d}s \leq - \omega_2 \int_0^{t} \|X_{\ep}(s,x)\|^2_V \, {\rm
      d}s
  \end{equation*}
  and
  \begin{align*}
    \int_0^t \langle F_{\eta,\ep}(X_{\ep}(s,x)), X_{\eta}(s,x) \rangle
    \, {\rm d}s \leq 0.
  \end{align*}
  Hence, taking the expectation of both members in \eqref{eq:Ito_app}
  we obtain
  \begin{equation*}
    \EE \sup_{t \in [0,T]} \left|X(t,x)\right|^2 + \omega_2 \EE
    \int_0^T \left\|X_{\ep}(s,x)\right\|^2_V \, {\rm d}s \leq
    \left|x\right|^2 + (6+\eta)\int_0^T \EE \sup_{s \in [0,t]}
    \left|X_{\ep}(s,x)\right|^2_H \, {\rm d}t.
  \end{equation*}
  By Gronwall's lemma this yields
  \begin{equation}\label{stima_lim}
    \EE \sup_{t \in [0,T]} \left|X_{\ep}(t,x)\right|^2_H + \omega_2
    \EE \int_0^T \left\|X_{\ep}(s,x)\right\|^2_V \, {\rm d}s \leq
    C(\left|x\right|^2+1).
  \end{equation}
  We conclude that $X_{\ep}\in L^2(\Om;L^2([0,T]; V))$.
\end{proof}

\section{Existence and uniqueness result}
\label{sez:4}

Here we make use of the results given in the last section to show that
problem \eqref{eq:abstract2} admits a unique solution.  Our main
result can be stated as follows.

\begin{theorem}\label{thm:ex}
  For every $x\in D(F)$ (resp. $x\in H$), there exists a unique mild
  (resp.\  generalized) solution $X\in L^2_W(\Om;C([0,T];H))\cap
  L^2_W(\Om;L^2([0,T];V)) $ to equation \eqref{eq:abstract} which
  satisfies
  \begin{equation}\label{eq:dip_d_iniz}
    \EE \left|X(t,x)-X(t,\bar{x})\right|^2_H \leq C
    \left|x-\bar{x}\right|^2_H.
  \end{equation}
\end{theorem}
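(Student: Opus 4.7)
The plan is to pass to the limit $\ep\to 0$ in the sequence $X_\ep$ of mild solutions to the approximating problem \eqref{eq:approx} constructed in Proposition \ref{prop:sol_approx}, exploiting the dissipativity of $A+F$ established in Lemma \ref{le:3}. The bound \eqref{stima_lim} is already available uniformly in $\ep$, but in order to control the cubic nonlinearity I first upgrade it to an $L^6$-estimate on the first component: applying Ito's formula (justified via a Galerkin truncation along the basis $\{e_k\}$, for which \eqref{eq:stima_e_k} is convenient) to $\gamma\int_0^1 |u_\ep(t,\xi)|^6\,{\rm d}\xi$, the monotonicity of $f_\eta$ absorbs the leading power and yields an estimate of the form
\[
\EE\sup_{t\in[0,T]}|u_\ep(t)|^6_{L^6(0,1)} \le C\bigl(1+|x|^6_{L^6(0,1)}\bigr),
\]
uniform in $\ep$, provided $x\in D(F)$.

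Once this is in hand I show that $\{X_\ep\}_{\ep>0}$ is Cauchy in $L^2_W(\Om;C([0,T];H))$. Applying Ito to $|X_\ep(t)-X_{\ep'}(t)|^2_H$ eliminates the stochastic integral (the driving noise is common to both equations), and decomposing
\[
F_{\eta,\ep}(X_\ep)-F_{\eta,\ep'}(X_{\ep'}) = \bigl[F_\eta(X_\ep)-F_\eta(X_{\ep'})\bigr] + \bigl[F_{\eta,\ep}(X_\ep)-F_\eta(X_\ep)\bigr] + \bigl[F_\eta(X_{\ep'})-F_{\eta,\ep'}(X_{\ep'})\bigr],
\]
the first bracket combines with $A_\eta(X_\ep-X_{\ep'})$ via Lemma \ref{le:3} to give a contribution bounded above by $-\omega|X_\ep-X_{\ep'}|^2_H$, while the two remainders are controlled in $L^1([0,T])$ by the uniform $L^6$-bound together with the pointwise convergence $F_{\eta,\ep}\to F_\eta$ on $L^6\times L^2$. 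Gronwall's lemma then gives the Cauchy property with a quantitative rate, and the limit $X$ inherits the bounds of Proposition \ref{prop:sol_approx}. Passing to the limit in the integral equation \eqref{eq:int} by dominated convergence identifies $X$ as a mild solution.

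Uniqueness and the continuous dependence estimate \eqref{eq:dip_d_iniz} are obtained by the same Ito/Gronwall argument applied directly to the difference of two mild solutions with (possibly different) initial data, using the dissipativity inequality $\langle (A+F)x-(A+F)y, x-y\rangle_H \le -\omega|x-y|^2_H$ that follows from Lemma \ref{le:3}. Finally, for $x\in H\setminus D(F)$ one approximates $x$ by a sequence $x_n\in D(F)$ converging to $x$ in $H$; estimate \eqref{eq:dip_d_iniz} shows that $\{X(\cdot,x_n)\}$ is Cauchy in $L^2_W(\Om;C([0,T];H))$, and its limit $X(\cdot,x)$ is by definition the generalized solution, satisfying \eqref{eq:dip_d_iniz}. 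Membership of $X(\cdot,x)$ in $L^2_W(\Om;L^2([0,T];V))$ is preserved in the limit by lower semicontinuity of the $V$-norm.

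The principal obstacle is the first step: in infinite dimensions Ito's formula applied to $|u|^6_{L^6}$ is only formal, and making it rigorous requires a careful finite-dimensional projection together with a treatment of the stochastic correction, which involves $\operatorname{Tr} Q_1$ weighted by fourth powers of $u_\ep$, as well as a Burkholder--Davis--Gundy bound to handle the supremum in time. Without this quantitative control the error terms appearing in the Cauchy argument cannot be absorbed, and the limiting scheme breaks down.
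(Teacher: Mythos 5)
Your overall scheme (regularize, obtain a uniform a priori bound strong enough to control the cubic term, prove a Cauchy property in $L^2_W(\Om;C([0,T];H))$, pass to the limit, then extend to $x\in H$ by density using \eqref{eq:dip_d_iniz}) is the same as the paper's, and your treatment of continuous dependence, uniqueness and generalized solutions coincides with the paper's Step~1. The genuine gap is in your first step, the claimed estimate $\EE\sup_{t}|u_\ep(t)|^6_{L^6}\le C(1+|x|^6_{L^6})$. When you apply It\^o's formula to $\int_0^1 u^6\,{\rm d}\xi$, the drift of the voltage equation contains the coupling term $-w$, which produces
\[
-6\int_0^1 w(\xi)\,u(\xi)^5\,{\rm d}\xi .
\]
The recovery variable $w$ is controlled only in $L^2(0,1)$, so Young's inequality forces you to pay either $\int w^6$ (unavailable) or $\int u^{10}$; and the coercive contribution of the regularized nonlinearity, $6\int f_{\eta,\ep}(u)u^5\approx-6\int (u-\xi_0)^3u^5/(1+\ep(\cdots))$, degenerates \emph{uniformly in} $\ep$ to at best $-c\int u^6+C$, because for $\ep u^2\gtrsim 1$ the denominator eats two powers of $u$. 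Neither option can therefore be absorbed: ``the monotonicity of $f_\eta$ absorbs the leading power'' is correct for the self-interaction term but not for this $w$--$u^5$ cross term, which your argument does not address, and the $L^6$-sup estimate does not close as stated.

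The paper circumvents exactly this difficulty by a different choice of Lyapunov functional: the antiderivative of the regularized nonlinearity, $\phi(x)=\int_0^1 g_\ep(u)$ with $g_\ep(r)=-\int_0^r f_{\eta,\ep}(s)\,{\rm d}s$, so that $D\phi=(-f_{\eta,\ep}(u),0)$. The drift then yields the good term $-\gamma|f_{\eta,\ep}(u)|^2_{L^2}$, and the dangerous cross term becomes $\gamma\langle w,f_{\eta,\ep}(u)\rangle\le \gamma\sigma|w|^2+\tfrac{\gamma}{\sigma}|f_{\eta,\ep}(u)|^2$, which is absorbed for $\sigma$ large; the trace correction involves only $f'_{\eta,\ep}(u)$, controlled through \eqref{eq:stima_e_k}. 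The output is the space--time bound $\EE\int_0^T|f_{\eta,\ep}(X_\ep(t,x))|^2\,{\rm d}t\le C$ (estimate \eqref{stima_f_ep}), which is precisely what feeds the Cauchy step. If you substitute this estimate for your $L^6$-sup bound, the rest of your argument can be repaired, with one further caveat: in the remainders $F_{\eta,\ep}(X_\ep)-F_\eta(X_\ep)$ the argument varies with $\ep$, so pointwise convergence of $F_{\eta,\ep}$ to $F_\eta$ at a fixed point of $L^6\times L^2$ proves nothing; you need a quantitative, argument-uniform rate such as $|f_{\eta,\ep}(u)-f_\eta(u)|\le C\ep\,|f_\eta(u)|(1+|u|^2)$ (or the paper's equivalent manipulation via $h_\ep$ and inequality \eqref{f_ep-f_lb2}) to extract the factor $C(\ep+\ep')$ from Gronwall.
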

    
\begin{proof}
  As shown in the proof of Proposition \ref{prop:sol_approx},
  $\left\{X_{\ep}\right\}_{\ep \geq 0}$ satisfies
  \begin{equation*}
    \EE \sup_{t\in [0,T]} \left|X_{\ep}(t,x)\right|^2_H + \omega_1 \EE
    \int_0^t \left\|X_{\ep}(s,x)\right\|^2_V \, {\rm d}s \leq
    C(\left|x\right|^2+1), \qquad t\geq 0.
  \end{equation*}
  therefore it is bounded in $L^2_W(\Om;C([0,T];H))\cap
  L^2_W(\Om;L^2([0,T];V))$.

  We are going to show the following estimates
  \begin{align}
    &\EE \int_0^T \left|f_{\eta,\ep}(X_\ep(t,x))\right|^2_H \, {\rm
      d}t \leq C,
    \label{stima_f_ep}
    \\
    &\EE \sup_{t\in [0,T]}
    \left|X_{\ep}(t,x)-X_{\lb}(t,x)\right|^2_{H} \leq C(\lb+\ep),
    \label{stimaC^0}
  \end{align}
  where we use the same symbol $C$ to denote several positive
  constants independent of $\ep$.
  Using the above results, we conclude that
  $\left\{X_{\ep}\right\}_{\ep}$ is a Cauchy sequence on
  \begin{equation*}
    L^2(\Om;C([0,T];H))\cap L^2(\Om;L^2([0,T];V))
  \end{equation*}
  and, consequently, it converges uniformly on $[0,T]$ to a process
  $X(t,x)$.
    
  {\em Step 1.} We begin with the continuous dependence on the initial
  condition. Let us consider the difference
  $X_{\ep}(t,x)-X_{\ep}(t,\bar{x})$, for $x, \bar{x} \in H$.

  Note that 
  \begin{multline*}
    {\rm d}X_{\ep}(t,x) - {\rm d}X_{\ep}(t,\bar{x})
    \\
    = A_{\eta} \left[X_{\ep}(t,x)-X_{\ep}(t,\bar{x})\right] \, {\rm d}t +
    \left[F_{\eta,\ep}(X_{\ep}(t,x))-F_{\eta,\ep}(X_{\ep}(t,\bar{x}))\right]
    \, {\rm d}t
  \end{multline*}
  hence
  \begin{align*}
    |X_{\ep}(t,x) &- X_{\ep}(t,\bar{x})|^2_H
    \\
    &= \left|x-\bar{x}\right|^2+2\int_0^t \left\langle A_{\eta}(
      X_{\ep}(s,x)-X_{\ep}(s,\bar{x})),X_{\ep}(s,x)-X_{\ep}(s,\bar{x})\right\rangle
    \, {\rm d}s
    \\
    &\quad + 2\int_0^t \left\langle F_{\eta,\ep}(X_{\ep}(s,x)) -
      F_{\eta,\ep}(X_{\ep}(s,\bar{x})), X_{\ep}(s,x)-X_{\ep}(s,\bar{x})
    \right\rangle \, {\rm d}s
  \end{align*}
  and therefore
  \begin{equation*}
    \EE\left|X_{\ep}(t,x)-X_{\ep}(t,\bar{x})\right|^2_H \leq
    \EE\left|x-\bar{x}\right|^2_H - 2\omega \int_0^t
    \EE\left|X_{\ep}(s,x)-X_{\ep}(s,\bar{x})\right|^2_H \, {\rm d}s.
  \end{equation*}
  Applying Gronwall's lemma we obtain
  \begin{equation}\label{eq:dip_dato_iniz}
    \EE\left|X_{\ep}(t,x)-X_{\ep}(t,\bar{x})\right|^2_H \leq e^{-2
      \omega t}\left|x-\bar{x}\right|^2_H.
  \end{equation}

  The continuity condition (\ref{eq:dip_d_iniz}) easily implies
  uniqueness of the mild solution on $D(F)$ and of the generalized
  solution on $H$. Consequently, it only remains to prove existence.

  {\em Step 2.}  Next, let us consider estimate \eqref{stima_f_ep}.
  We shall apply Ito's formula to the function
  \begin{equation*}
    \phi(x) = \int_0^1 g_{\ep}(u(\xi)) \, {\rm d}\xi, \qquad
    x=(u(\xi),v(\xi)) \in H,
  \end{equation*}
  where
  \begin{equation*}
    g_{\ep}(r) = - \int_0^r f_{\eta,\ep}(s) \, {\rm d}s, \qquad r\in \R^+,\
    \ep>0.
  \end{equation*}
  It is not difficult to show that, for any $x \in D(F)$,
  \begin{equation*}
    D\phi(x)= 
    \begin{pmatrix}
      -f_{\eta,\ep}(u) \\ 0
    \end{pmatrix}
    \qquad \textrm{and} \qquad D^2\phi(x)= 
    \begin{pmatrix}
      -f_{\eta,\ep}'(u) & 0 \\ 0 & 0
    \end{pmatrix},
  \end{equation*}
  thus
  \begin{multline*}
    \left\langle A_{\eta}X_{\ep} + F_{\eta,\ep}(X_\ep),
      D\phi(X_{\ep})\right\rangle = -\gamma \left\langle \partial_{\xi}
      (c(\cdot) \partial_\xi U_{\ep}), f_{\eta,\ep}(U_{\ep}) \right\rangle
    \\
    + \gamma \left\langle
      (p(\xi)-\eta)U_{\ep},f_{\eta,\ep}(U_{\ep})\right\rangle +\gamma
    \left\langle V_{\ep},f_{\eta,\ep}(U_{\ep})\right\rangle-\gamma
    \left|f_{\eta,\ep}(U_{\ep})\right|^2.
  \end{multline*}
  We claim that 
  \begin{multline}\label{eq:f'ep}      
    f_{\eta,\ep}'(u) = -\ep \, \frac{\left( -1 + 2\,\left( u - \xi_0
        \right) \right) \, \left( -{\left( u - \xi_0 \right) }^3 -
        {\xi_0 }^3 \right) }{{\left( 1 + \ep \,\left( 1 - u + {\left(
                u - \xi_0 \right) }^2 + \xi_0 \right) \right) }^2}
    \\
    - \frac{3\,{\left( u - \xi_0 \right) }^2} {1 + \ep \,\left( 1 - u
        + {\left( u - \xi_0 \right) }^2 + \xi_0 \right) }
  \end{multline}
  is always negative; then it follows that
  \begin{equation*}
    -\int_0^1 (\partial_{\xi} c(\xi)\partial_{\xi} u)
    f_{\eta,\ep}(u) \, {\rm d}\xi =
    - c(\xi)\partial_{\xi}u(\xi)\Big|_0^1 - \int_0^1
    c(\xi)(\partial_{\xi} u(\xi))^2 f'_{\ep}(u(\xi)) \, {\rm d}\xi \le 0
  \end{equation*}
  and, for any $\sigma >0$
  \begin{equation*}
    \left\langle v, f_{\eta,\ep}(u) \right\rangle \leq \sigma
    \left|v\right|^2 + \frac{1}{\sigma} \left|f_{\eta,\ep}(u)\right|^2.
  \end{equation*}
  From the above inequalities it follows that for $\sigma$
  sufficiently large
  \begin{equation} \label{AFphi}
    \begin{aligned}
      \langle A_{\eta}X_{\ep} &+ F_{\eta,\ep}(X_\ep), D\phi(X_{\ep})\rangle 
      \\
      &\leq \gamma \sigma \left(\eta + \|p\|^2_{L^\infty([0,1])}\right) |U_\ep|^2
      + \gamma \sigma \left|V_{\ep}\right|^2
      +\gamma\left(\dfrac{2}{\sigma}-1\right)
      \left|f_{\eta,\ep}(U_{\ep})\right|^2 %\leq 
      \\
      &\leq - C \left|f_{\eta,\ep}(U_{\ep})\right|^2 +
      K\left|X_{\ep}\right|^2
    \end{aligned}
  \end{equation}
  for suitable constants $C,K$.  Further,
  \begin{equation*}
    {\rm Tr} (Q D^2\phi(X_{\ep}))
    = -\sum_{k=1}^{\infty} \left\langle Q_1
      f_{\eta,\ep}'(U_{\ep})e_{k},e_k\right\rangle
    %\\
    %= -\sum_{k=1}^{\infty} \int_0^1 Q_1 f_{\eta,\ep}'(U_{\ep}(\xi)) e_{k}^2(\xi)
    %\, {\rm d}\xi 
    = - \sum_{k=1}^{\infty} \lambda_k \int_0^1
      f_{\eta,\ep}'(U_{\ep}(\xi)) e_k^2(\xi) \, {\rm d}\xi.
  \end{equation*}

  Now we observe that
  \begin{align*}
    \frac{\left|-3(u-\xi_0)^2 -
        \ep(-\xi_0+2(u-\xi_0))(-(u-\xi_0)^3-\xi_0^3)\right|}{1 + \ep -
      \xi_0 \ep(u-\xi_0) + \ep(u-\xi_0)^2} \leq
    4\left(\left|u-\xi_0\right|^2+\ep\right),
  \end{align*}
  so that for $\ep$ sufficiently small, taking into account
  \eqref{eq:f'ep} and the uniform bound condition on the $e_k$ stated
  in assumption \eqref{eq:stima_e_k}, we have
  \begin{multline*}
    \left|f'_{\ep}(u(\xi))e_k^2\right| \leq
    4\left(\left|u(\xi)-\xi_0\right|^2+\ep\right)\dfrac{\left|e_k^2(\xi)\right|}{1+\ep-\xi_0\ep(u(\xi)-\xi_0)+\ep(u(\xi)-\xi_0)^2}
    \\ \leq C\left(\left|u(\xi)-\xi_0\right|^2+\ep\right) \leq
    C\left(\left|u(\xi)\right|^2+1\right),
  \end{multline*}
  therefore
  \begin{multline} \label{TrQphi}
    \EE \int_0^t \left|{\rm Tr}\left[Q D^2\phi(X_{\ep}(s))\right]
    \right| \, {\rm d}s \leq \EE \int_0^t {\rm d}s \left(\int_0^1
      \sum_{k=1}^{\infty} \lambda_k \left|f_{\eta,\ep}'(U_{\ep}(s)(\xi))
        e_k^2(\xi)\right| \, {\rm d}\xi \right)
    \\
    \leq \, C \left(1 + \EE \int_0^t \left|X_{\ep}(s)\right|^2_H \,
      {\rm d}s \right).
 \end{multline}
  Estimates \eqref{AFphi} and \eqref{TrQphi} yield
  \begin{multline*}
    \EE \phi(X_{\ep}(t,x)) +  \EE \int_0^t
    \left|f_{\eta,\ep}(X_{\ep}(s,x))\right|^2 \, {\rm d}s 
    \\ 
    \leq \phi(x)+C\left(1+\EE\int_0^t \left|X_{\ep}(s,x)\right|^2 {\rm d}s \right) \leq 
    C, 
  \end{multline*}
  and therefore
  \begin{equation*}
    \EE \int_0^T \left|f_{\eta,\ep}(X_{\ep}(t,x))\right|^2 \, {\rm d}t \leq C,
  \end{equation*}
  so that inequality \eqref{stima_f_ep} is proved.
  
  {\em Step 3.} We proceed to estimate \eqref{stimaC^0}. We observe
  that
  \begin{equation*}
    {\rm d}(X_{\lb}(t,x)-X_{\ep}(t,x)) =
    \left[A_{\eta}(X_{\lb}(t,x)-X_{\ep}(t,x)) +
      F_{\eta,\lb}(X_{\lb}(t,x))-F_{\eta,\ep}(X_{\ep}(s,x))\right] \, {\rm d}t.
  \end{equation*}
  Hence, using Ito's formula as before we get
  \begin{equation*}\label{f_ep-f_lb1}
    \begin{aligned}
      \EE \sup_{[0,T]}|X_{\lb}(t,x) &- X_{\ep}(t,x)|^2
      \\
      =& \EE \int_0^T \left\langle
        A_{\eta}(X_{\lb}(s,x)-X_{\ep}(s,x)),X_{\lb}(s,x)-X_{\ep}(s,x)\right\rangle
      \, {\rm d}s
      \\
      &+ \EE \int_0^T \left\langle F_{\eta,\lb}(X_{\lb}(s,x) -
        F_{\eta,\ep}(X_{\ep}(s,x)), X_{\lb}(s,x)-X_{\ep}(s,x)\right\rangle
      \, {\rm d}s
      \\
      \leq& -\omega_2 \EE \int_0^T
      \left\|X_{\lb}(s,x)-X_{\ep}(s,x)\right\|^2 \, {\rm d}s
      \\
      &+ \EE\int_0^T \left\langle f_{\eta,\lb}(U_{\lb}(s,x)) -
        f_{\eta,\ep}(U_{\ep}(s,x)), U_\lb(s,x) - U_\ep(s,x)\right\rangle \,
      {\rm d}s.
  \end{aligned}    
  \end{equation*}
  Now set 
  \begin{align*}
    h_{\ep}(u)&=\dfrac{f_{\eta,\ep}(u)}{1+\ep
      -\ep\xi_0(u-\xi_0)+\ep(u-\xi_0)^2} + \dfrac{\xi_0^3}{1+\ep
      (1-\xi_0(u-\xi_0)+(u-\xi_0)^2)}
    \\
    &=\dfrac{-(u-\xi_0)^3}{1+\ep(1 -\xi_0(u-\xi_0)+(u-\xi_0)^2)}.
  \end{align*}
  We note that, for any $u,v$,
  \begin{equation} \label{f_ep-f_lb2}
    (h_{\lb}(u)-h_{\ep}(v))(u-v) \leq
    (h_{\lb}(u)-h_{\ep}(v))((u+h^{1/3}_{\lb}(u))-(v+h^{1/3}_{\ep}(v))).
  \end{equation}
  In fact, 
  \begin{multline*}
    (h_{\lb}(u)-h_{\ep}(v))(u-v) -
    (h_{\lb}(u)-h_{\ep}(v))((u+h^{1/3}_{\lb}(u))-(v+h^{1/3}_{\ep}(v)))
    \\
    =\,-(h^{1/3}_{\lb}(u)-h^{1/3}_{\ep}(v))^2
    (h^{2/3}_{\lb}(u)+h^{1/3}_{\ep}(u)h^{1/3}_{\lb}(u)+h^{2/3}_{\lb}(u))\leq
    0.
  \end{multline*}
  Moreover one can compute
  \begin{equation*}
    \left|u-\xi_0+h_{\ep}^{1/3}(u)\right|\leq \ep \left|h_{\ep}(u)\right|,
  \end{equation*}
  therefore
  \begin{align}\label{dis:geta}
    (h_{\ep}(u)-h_{\lb}(v))(u-v) &\leq
    \left(\left|h_{\ep}(u)\right|+\left|h_{\lb}(v)\right|\right)
    \left(\ep\left|
        h_{\ep}(u)\right|+\lb\left|h_{\lb}(v)\right|\right) 
    \notag
    \\
    & \leq
    C(\ep+\lb)(\left|h_{\ep}(u)\right|^2+\left|h_{\lb}(v)\right|^2).
  \end{align}
  Furthermore, we observe that
  \begin{align}\label{dis:f_xi_0}
    &\xi_0^3\left(-\dfrac{1}{1+\ep -\ep\xi_0(u-\xi_0)+\ep(u-\xi_0)^2}
      + \dfrac{1}{1+\ep -\ep\xi_0(v-\xi_0)+\ep(v-\xi_0)^2}\right)(u-v)
    \notag\\
    &=\xi_0^3\dfrac{-\ep(u-\xi_0)^2+\lb(v-\xi_0)^2}{(1+\ep
      -\ep\xi_0(u-\xi_0)+\ep(u-\xi_0)^2)(1+\ep
      -\ep\xi_0(v-\xi_0)+\ep(v-\xi_0)^2)}(u-v)
    \notag\\
    &\leq \xi_0^3(\ep+\lb)\left[\left|u-v\right| +
      \left|u-\xi_0\right|^2 + \left|v-\xi_0\right|^2 +
      \left|u-\xi_0\right|^3 + \left|v-\xi_0\right|^3\right]
  \end{align}
  Combining \eqref{dis:geta} and \eqref{dis:f_xi_0} we get 
  \begin{multline*}
    (f_{\eta,\ep}(u)-f_{\eta,\lb}(v))(u-v)\leq \\
    C(\ep+\lb)\left(\left|h_{\ep}(u)\right|^2 +
      \left|h_{\lb}(v)\right|^2 +\left|u-v\right| +
      \left|u-\xi_0\right|^2 + \left|v-\xi_0\right|^2 +
      \left|u-\xi_0\right|^3 + \left|v-\xi_0\right|^3\right)
  \end{multline*}
  and, consequently,
  \begin{multline*}
    \EE \sup_{[0,T]}\left|X_{\lb}(t,x)-X_{\ep}(t,x)\right|^2 
    \\
    \leq \EE\int_0^T \int_0^1
    (f_{\eta,\lb}(X_{\lb})-f_{\eta,\ep}(X_{\ep}))(X_{\lb}-X_{\lb}) \, {\rm d}\xi \, {\rm d}t \leq
    C(\ep+\lb)
  \end{multline*}
  We conclude that there exists the limit $X=\lim\limits_{\ep \to 0} X_{\ep}$
  in $L^2(\Omega; C([0,T];H))$ and, by \eqref{stima_lim}, also that
  $X\in L^2(\Omega;L^2([0,T]; V))$. Moreover, estimate
  \eqref{eq:dip_dato_iniz} implies inequality \eqref{eq:dip_d_iniz}.
\end{proof}

We conclude the section with another estimate which turns out to be
useful when we will deal with the asymptotic behaviour of the
solution.

\begin{lemma}\label{lemma:stima_potenza2m}
  The following estimate holds
  \begin{align*}
    \EE \left|X(t,x)\right|^{2m} \leq C_m \left(1+e^{-m\omega_1
        t}\left|x\right|^{2m}\right), \quad x\in H, t\geq 0.
  \end{align*}
\end{lemma}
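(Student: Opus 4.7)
The plan is to establish the estimate first for the approximating solutions $X_\epsilon(t,x)$ from Proposition \ref{prop:sol_approx} with constants uniform in $\epsilon$, then pass to the limit $\epsilon \to 0$ by means of the convergence $X_\epsilon \to X$ in $L^2_W(\Om;C([0,T];H))$ established in Theorem \ref{thm:ex}. The key ingredient is Ito's formula applied to the functional $\phi(x) = |x|_H^{2m}$ along the trajectory $X_\epsilon(t,x)$; since $X_\epsilon$ is only a mild solution, this must be justified (as noted already in the proof of Proposition \ref{prop:sol_approx}) via a truncation / Yosida-regularization argument.

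After taking expectations so that the stochastic integral contributes nothing, the drift term reads $2m\,\EE\bigl[|X_\epsilon|^{2m-2}\langle A_\eta X_\epsilon + F_{\eta,\epsilon}(X_\epsilon),X_\epsilon\rangle\bigr]$. By Lemma \ref{prop:dissA} we have $\langle A_\eta X_\epsilon,X_\epsilon\rangle \leq -\omega_1|X_\epsilon|_H^2$, while the monotonicity of $f_{\eta,\epsilon}$ together with $f_{\eta,\epsilon}(0)=0$ (observe that $f_\eta$ is nonincreasing and vanishes only at $0$ once the $-\xi_0^3$ constant is absorbed into the ratio, so the same sign pattern is preserved by the scaling in the denominator, which is strictly positive since the quadratic $1-\xi_0 v+v^2$ has negative discriminant for $\xi_0\in(0,1)$) yields $\langle F_{\eta,\epsilon}(x),x\rangle_H\leq 0$, exactly as already used in Proposition \ref{prop:sol_approx}. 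The second-derivative contribution is controlled by
\begin{equation*}
\mathrm{Tr}\bigl[Q\,D^2\phi(x)\bigr] = 2m|x|^{2m-2}\mathrm{Tr}(Q) + 2m(2m-2)|x|^{2m-4}\langle Qx,x\rangle \leq C_m|x|_H^{2m-2},
\end{equation*}
since $\mathrm{Tr}(Q)<\infty$ and $\langle Qx,x\rangle\leq \|Q\|_{\mathcal{L}(H)}|x|_H^2$.

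Combining these bounds and absorbing the $|X_\epsilon|^{2m-2}$ term into a small fraction of the dissipative term via Young's inequality $a^{2m-2}\leq \delta a^{2m}+C_\delta$ (with $\delta$ so small that the coefficient of $\EE|X_\epsilon|^{2m}$ remains bounded above by $-m\omega_1$), the integral inequality that results can be differentiated to give
\begin{equation*}
\frac{d}{dt}\EE|X_\epsilon(t,x)|^{2m}_H \leq -m\omega_1\,\EE|X_\epsilon(t,x)|^{2m}_H + C'_m,
\end{equation*}
whose solution furnishes $\EE|X_\epsilon(t,x)|^{2m}_H \leq e^{-m\omega_1 t}|x|_H^{2m} + C''_m$ uniformly in $\epsilon$. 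Fatou's lemma then transfers the bound to $X(t,x)$. The main technical obstacle is the rigorous application of Ito's formula for $\phi(x)=|x|^{2m}$ to a merely mild solution in infinite dimensions; the rest is a quantitative refinement of the $L^2$ moment bound already used in the proof of Proposition \ref{prop:sol_approx}, with care taken in the choice of $\delta$ so as to recover the precise exponential rate $m\omega_1$ rather than a smaller one.
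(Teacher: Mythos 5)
Your proof is correct, but it takes a genuinely different route from the paper's. You apply It\^o's formula to $\phi(x)=|x|_H^{2m}$ directly along the Lipschitz approximations $X_\ep$, annihilate the nonlinearity outright via $\langle F_{\eta,\ep}(x),x\rangle_H\le 0$ (the same sign inequality already invoked in Proposition \ref{prop:sol_approx}; your justification is sound, since $f_\eta(u)\,u\le 0$ and the denominator $1+\ep(1-\xi_0(u-\xi_0)+(u-\xi_0)^2)$ is strictly positive because the quadratic has negative discriminant), control ${\rm Tr}[QD^2\phi]$ by $C_m|x|^{2m-2}$, absorb that lower-order term with Young's inequality, and finish with Gronwall and Fatou. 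The paper instead sets $Y(t)=X(t,x)-W_{A_\eta}(t)$, so that $Y$ solves a pathwise deterministic equation, derives $\tfrac{1}{2m}\tfrac{{\rm d}}{{\rm d}t}|Y|^{2m}\le -\omega_1|Y|^{2m}+C|F_\eta(W_{A_\eta})|^{2m}$ from the dissipativity of $A_\eta$ and $F_\eta$, and then uses the polynomial growth \eqref{eq:crescita-pol-F_eta} together with the uniform moment bound \eqref{eq:stimaW1} on the stochastic convolution. Your route needs no growth estimate on $F_\eta$ and no moments of $W_{A_\eta}$ beyond ${\rm Tr}(Q)<\infty$, at the price of justifying It\^o's formula for $|x|^{2m}$ on a mild solution and of checking that the stochastic integral $2m\int_0^t|X_\ep|^{2m-2}\langle X_\ep,\sqrt{Q}\,{\rm d}W\rangle$ has zero mean; both points are covered by the a priori bound $\EE\sup_{[0,T]}|X_\ep(t,x)|^p\le C(1+|x|^p)$ from Proposition \ref{prop:sol_approx} plus the usual truncation, i.e.\ no worse than what the paper already does formally for $m=1$. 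The paper's route trades this for the need of $6m$-th moments of $W_{A_\eta}$, supplied by Proposition \ref{pr_continuity_of_W_A}. Both arguments yield the stated rate $e^{-m\omega_1 t}$.
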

             
\begin{proof}
  Let $Y(t)=X(t,x)-W_{A_\eta}(t)$. Then 
 \begin{equation*}
   \tfrac{{\rm d}}{{\rm d}t} Y(t)=A_{\eta}
   Y(t)+F_{\eta}(Y(t)+W_{A_{\eta}}(t)), \qquad Y(0)=x.
 \end{equation*}
 Observe that
 \begin{align*}
   \tfrac{1}{2m} \tfrac{{\rm d}}{{\rm d}t} \left|Y(t)\right|^{2m} &=
   \left|Y(t)\right|^{2m-2} \tfrac{{\rm d}}{{\rm d}t}
   \left|Y(t)\right|^2
   \\
   &\leq -\omega_1 \left|Y(t)\right|^{2m} + \left\langle
     F_{\eta}(Y(t)+W_{A_{\eta}}(t)), Y(t) \right\rangle
   \left|Y(t)\right|^{2m-2}
   \\
   & \leq -\omega_1\left|Y(t)\right|^{2m} + \left\langle
     F_{\eta}(W_{A_{\eta}}(t)),Y(t)\right\rangle
   \left|Y(t)\right|^{2m-2}
   \\
   & \leq -\omega_1+\left|F_{\eta}(W_{A_{\eta}}(t))\right|
   \left|Y(t)\right|^{2m-1}
   \end{align*}
%%    Now recall that for any $a,b>0$ and $m\in \mathbb{N}$ we have
%%    \begin{equation*}
%%       (a+b)^{2m}= \sum_{k=0}^{2m}  \left({2m \atop k}\right) a^k b^{2m-k}
%%    \end{equation*}
%%    so that 
%%    $$
%%        2m a b^{2m-1} \leq (a+b)^{2m} \leq c_m (a^{2m}+b^{2m}).
%%    $$
   Hence
%%    for
%%    $a=\left|F_{\eta}(W_{A_{\eta}}(t))\right|,b=\left|Y(t)\right|^{2m-1}$ 
   we conclude that
   \begin{align*}
     \tfrac{1}{2m} \tfrac{{\rm d}}{{\rm d}t} \left|Y(t)\right|^{2m}
     \leq -\omega_1 \left|Y(t)\right|^{2m}+ C
     \left|F_{\eta}(W_{A_{\eta}}(t))\right|^{2m}.
 \end{align*}
 for some $C > 0$. By Gronwall's lemma it follows that
 \begin{equation*}
   \left|Y(t)\right|^{2m} \leq e^{-m\omega_1 t} \left|x\right|^{2m} +
   2m C \int_0^t e^{-m \omega_1(t-s)}\left|F_{\eta}(W_{A_{\eta}}(s))\right|^{2m} \,
   {\rm d}s,
 \end{equation*}
 so that for some $C > 0$ (possibly different from the above): %such that
 \begin{multline}\label{eq:potenza2m}
   \left|X(t,x)\right|^{2m} 
   \\
   \leq C\left( e^{-m\omega_1
     t}\left|x\right|^{2m}
   +\int_0^t e^{-m\omega_1
       (t-s)}\left|F_{\eta}(W_{A_{\eta}}(s))\right|^{2m} \, {\rm d}s +
     \left|W_{A_{\eta}}(t)\right|^{2m}\right).
 \end{multline}
 Now recall that $F_{\eta}$ has polynomial growth (see %\ref{sec:diss},
 \eqref{eq:crescita-pol-F_eta}); in particular we have that
 \begin{equation*}
   \left|F_{\eta}(W_{A_{\eta}}(t))\right|^{2m} \leq C \left(1 +
     \left|W_{A_{\eta}}(t)\right|^{3}\right)^{2m} \leq C (1 +
   \left|W_{A_{\eta}}(t)\right|^{6m}).
 \end{equation*}
 Moreover, by \eqref{eq:stimaW1}, $\sup\limits_{t\geq 0}\EE
 \left|W_{A_{\eta}}(t)\right|^{2m} < C_m$,
%%   \begin{align*}
%%     \sup\limits_{t\geq 0}\EE \left|W_A(t)\right|^{2m}<\infty,
%%   \end{align*}
 then
 \begin{multline*}
   \int_0^t e^{-m\omega_1 (t-s)}\left|F_{\eta}(W_{A_{\eta}}(t))\right|^{2m} \, {\rm
     d}s \leq C \int_0^t e^{-m\omega_1 (t-s)} \left(1 +
     \left|W_{A_{\eta}}(s)\right|^{6m}\right) \, {\rm d}s
   \\
   \leq C \int_0^t e^{-m\omega_1 (t-s)} \left(1 + C_m^3\right) \,
   {\rm d}s \leq C'_m.
 \end{multline*}
 Using the last estimate in \eqref{eq:potenza2m} we conclude the
 proof.
 
\end{proof}
     
\section{Asymptotic behaviour of solutions}

Let $P_t : C_b(H) \to C_b(H)$ be the transition semigroup associated
to the flow $X(t,\cdot)$ defined in equation \eqref{eq:abstract}, that
is
\begin{equation}
  P_t \phi(x)=\EE \phi(X(t,x)),\quad \phi \in C_b(H), t\geq 0, x\in H.
\end{equation}
We are ready to prove the main result of the paper.
%% Here we shall prove the existence of an invariant measure for $P_t$.
%% More precisely, we shall prove the following theorem.
     
\begin{theorem}\label{thm:mis_inv}
  Under hypothesis \ref{hp:1} there exists a unique invariant measure
  $\mu$ for $P_t$.
\end{theorem}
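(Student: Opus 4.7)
The plan is to establish uniqueness by exploiting the exponential contraction already embedded in the proof of Theorem \ref{thm:ex}, and to obtain existence through the Krylov--Bogoliubov theorem. For uniqueness, the key input is the $L^2$-contraction $\EE|X(t,x)-X(t,\bar x)|^2_H \leq e^{-2\omega t}|x-\bar x|^2_H$ recorded in \eqref{eq:dip_dato_iniz}. This immediately gives $|P_t\phi(x)-P_t\phi(y)| \leq \|\phi\|_{\mathrm{Lip}}\,e^{-\omega t}|x-y|_H$ for any Lipschitz $\phi \in C_b(H)$. If $\mu_1,\mu_2$ are both invariant (each has finite first moment, obtained by integrating the bound of Lemma \ref{lemma:stima_potenza2m} against $\mu_i$ and using $P_t$-invariance), then
\[
  \int\phi\,d\mu_1 - \int\phi\,d\mu_2 \;=\; \int\!\!\int \bigl(P_t\phi(x)-P_t\phi(y)\bigr)\,d\mu_1(x)\,d\mu_2(y) \;\xrightarrow[t\to\infty]{}\; 0,
\]
so $\mu_1=\mu_2$ on Lipschitz functions and hence as Borel probability measures on the Polish space $H$.

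For existence I would apply Krylov--Bogoliubov to the Ces\`aro averages $\mu_T = \tfrac1T\int_0^T P_t^{*}\delta_0\,dt$. The Feller property $P_t(C_b(H))\subset C_b(H)$ is immediate from the continuous dependence estimate in Theorem \ref{thm:ex} plus dominated convergence. What is left is tightness of $\{\mu_T\}_{T\geq 1}$ on $H$. I would decompose $X(t,0) = Y(t) + W_{A_\eta}(t)$: the Gaussian family $\{\mathcal{L}(W_{A_\eta}(t))\}_{t\geq 0}$ is tight because, by Proposition \ref{prop:traccia}, the covariances $Q_t = \int_0^t e^{sA_\eta}Qe^{sA_\eta^{*}}\,ds$ are uniformly of trace class and converge to a limit $Q_\infty$. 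For the regular part $Y$, the joint dissipativity of $A_\eta+F_\eta$ combined with the moment bound of Lemma \ref{lemma:stima_potenza2m} produces a uniform-in-$t$ control of $\EE\|Y(t)\|_V^2$, which gives $H^1$-regularity of the first component; the Duhamel formula $Y_2(t) = e^{-\alpha t}x_2 + \gamma\int_0^t e^{-\alpha(t-s)}Y_1(s)\,ds$ then transfers this regularity to the second component. Since the embedding $H^1(0,1) \hookrightarrow L^2(0,1)$ is compact, this is enough to conclude tightness on $H$.

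The main technical obstacle is exactly this last point. The recovery variable $w$ has no intrinsic parabolic smoothing --- the ``semigroup'' $e^{-\alpha t}$ acting on the second component is a mere contraction --- so compactness in the $w$-direction is not available directly, and must be imported from the $u$-direction through the coupling term $\gamma u$ by means of the Duhamel representation described above. Once this decomposition is carried out and tightness is verified, Krylov--Bogoliubov yields an invariant probability measure $\mu$, and the contraction argument of the first step identifies it as the unique one.
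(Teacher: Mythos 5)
Your argument is essentially correct, but it follows a genuinely different route from the paper. The paper proves existence and uniqueness simultaneously by the dissipativity (``remote start'') method of \cite[Theorem 11.21]{dpz:stochastic}: the Wiener process is extended to negative times, one solves the equation from time $-\lambda$ with datum $x$, and the exponential contraction estimate \eqref{eq:dip_dato_iniz} together with the uniform second-moment bound shows that $X_\lambda(0,x)$ is Cauchy in $L^2(\Omega;H)$ as $\lambda\to\infty$; the law of the limit $Y$ is the invariant measure, and uniqueness is automatic. This completely sidesteps tightness, which is exactly the delicate point in your Krylov--Bogoliubov approach, since --- as you correctly observe --- the $w$-component has no smoothing and compactness must be imported from the $u$-component through the coupling. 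Your uniqueness step is the same contraction argument as the paper's and is fine (though to get finite first moments of an \emph{arbitrary} invariant measure you should integrate the truncated quantity $|x|^{2}/(1+\beta|x|^{2})$ and let $\beta\to0$, as the paper does for its own $\mu$, to avoid circularity). The one imprecise point in your existence step is the claim that Lemma \ref{lemma:stima_potenza2m} yields a uniform-in-$t$ control of $\EE\|Y(t)\|_V^2$: that lemma only bounds $H$-moments, and the paper's $V$-estimate \eqref{stima_lim} has a Gronwall constant that grows with $T$. What you actually need, and what suffices for Ces\`aro averages, is the $T$-uniform time-averaged bound $\tfrac1T\int_0^T\EE\|X(s,0)\|_V^2\,{\rm d}s\le C$, which you obtain by redoing the It\^o computation for $|X|_H^2$ \emph{without} discarding the negative term $-2\omega_2\|X\|_V^2$; combined with your Duhamel transfer to the second component and the tightness of the Gaussian convolution (covariances dominated by the trace-class $Q_\infty$ of Proposition \ref{prop:traccia}), this closes the tightness argument. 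So your proof can be completed, at the cost of more work than the paper's; what the paper's method buys is that a single contraction estimate delivers both existence and uniqueness with no compactness considerations at all, while your method is more robust in situations where no such contraction is available.
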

     
\begin{proof}
  To discuss the existence of the invariant measure, it will be
  convenient to consider equation \eqref{eq:abstract} on the whole
  real line. Therefore we extend the process $W(t)$ for $t<0$ by
  choosing a process $\tilde{W}(t)$ with the same law as $W(t)$ but
  independent of it and setting
  \begin{equation*}
    W(t)=\tilde{W}(-t), \quad t\leq 0.
  \end{equation*}
  Now, for any $\lambda >0$, denote by $X_{\lb}(t,x)$, $t\geq -\lb$,
  the unique solution of
  \begin{align*}
    &{\rm d}X=[A_{\eta}X+F_{\eta}(X)] \, {\rm d}t+\sqrt{Q} \, {\rm d}W(t)\\
    &X(-\lb)=x\in H.
  \end{align*}
  Then $X_{\lb}$ satisfies the following integral equation: 
  \begin{align*}
    X_{\lb}(t,x) = x + \int_{-\lb}^t
    A_{\eta}X_{\lb}(s,x)+F(X_{\lb}(s,x)) \, {\rm d}s + \int_{-\lb}^t
    \sqrt{Q} \, {\rm d}W(s).
  \end{align*}
  We note that 
  \begin{align*}
    X(\lb,x) &= x + \int_{0}^{\lb}
    A_{\eta}X_{\lb}(s,x)+F(X_{\lb}(s,x)) \, {\rm d}s + \int_{0}^{\lb}
    \sqrt{Q} \, {\rm d}W(s)
    \\
    &= x + \int_{-\lb}^{0} A_{\eta}X_{\lb}(s,x)+F(X_{\lb}(s,x)) \,
    {\rm d}s + \int_{-\lb}^{0} \sqrt{Q} \, {\rm d}W(s)
    \\
    &= X_{\lb}(0,x).
  \end{align*}         
  Thus, the theorem will be proved once we establish that
  \begin{equation*}
    \lim_{\lb \to \infty} \mathcal{L}(X_{\lb}(0,x)) = \mu 
  \end{equation*}
  weakly, for some $\mu \in M^+_1(H)$ and all $x \in H$. As in
  \cite[Theorem 11.21]{dpz:stochastic} we will not prove only this,
  but we will show that there exists a random variable $Y\in
  L^2(\Omega;\mathcal{F},\P)$ such that
  \begin{equation}\label{eq:X_lb}
    \lim_{\lb \to \infty} \EE \left|X_{\lb}(t,x)-Y\right|^2=0, \quad x\in H,
  \end{equation}
  and the law of $Y$ is the required stationary distribution.
         
  We first prove that \eqref{eq:X_lb} is true when $x=0$. We put
  $X_{\lb}(t,0)=X_{\lb}(t)$.  Proceeding as in Theorem \ref{thm:ex} we
  obtain
  \begin{align*}
    \EE \left|X_{\lb}(t)\right|^2 \leq -2 \omega\EE\int_{-\lb}^t
    \left|X_{\lb}(s)\right|^2_H \, {\rm d}s + 2{\rm Tr}[Q]t
  \end{align*} 
  Using Gronwall's lemma, we have
  \begin{align}\label{eq:lim1}
    \EE \left|X_{\lb}(t)\right|^2 \leq (2{\rm
      Tr}[Q](t+\lb)+\left|x\right|^2)e^{-2\omega(t+\lb)}\leq C, \quad
    \forall \lb>0, \forall t\in [-\lambda,\infty].
  \end{align}
  
  We can now prove \eqref{eq:X_lb}. Let $\gamma <\lb$; then
  \begin{align*}
    X_{\lb}(t,0)=X_{\gamma}(t,X_{\lb}(-\gamma,0)), \quad t\geq -\gamma
  \end{align*}
  and, proceeding as in Theorem \ref{thm:ex} we obtain an estimate
  similar to that in \eqref{eq:dip_dato_iniz}
  \begin{multline}\label{eq:Cauchy1}
    \EE\left|X_{\lb}(t,0)-X_{\gamma}(t,0)\right|^2 =
    \EE\left|X_{\gamma}(t,X_{\lb}(-\gamma,0))-X_{\gamma}(t,0)\right|^2 
    \\
    \leq e^{-\omega_1(t+\gamma)}\EE\left|X_{\lb}(-\gamma)\right|^2_H
    \leq C e^{-\omega_1(t+\gamma)}.
  \end{multline} 
  Estimates \eqref{eq:lim1} and \eqref{eq:Cauchy1} imply that
  $\left\{X_{\lb}(0)\right\}_{\lb\geq 0}$ is a bounded Cauchy sequence
  in $L^2_W(\Omega;H)$.  Then there exists a random
  variable $Y$ such that $\EE\left|X_{\lb}(0)-Y\right|^2_H \to 0$, as
  $\lb \to \infty$. Proceeding similarly we show that
  \begin{equation*}
    \lim_{\lambda \to \infty}
    \EE\left|X_{\lb}(0,x)-X_{\lb}(0)\right|^2=0, \quad \forall x\in H.
  \end{equation*}
  This ends the proof.
\end{proof}

\begin{lemma}\label{lemma:uguaglianzaP_t}
  For any $\phi \in C_b(H)$ and $x\in H$ there exists the limit
  \begin{align*}
    \lim_{t \to \infty} P_t \phi=\int_H \phi(y)\mu({\rm d}y).
  \end{align*}
\end{lemma}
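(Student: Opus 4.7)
The plan is to reduce the statement to the $L^2$-convergence established in the proof of Theorem~\ref{thm:mis_inv}, plus one soft convergence step. The first ingredient is an identification of laws: by the stationarity of the increments of the two-sided Wiener process $W$ constructed in that proof, the shifted noise $s\mapsto W(s)-W(-t)$ restricted to $[-t,0]$ is a cylindrical Wiener process with the same law as $W$ on $[0,t]$. Feeding this into the mild formulation \eqref{eq:int} yields $\mathcal{L}(X(t,x)) = \mathcal{L}(X_t(0,x))$, so
\[
  P_t\phi(x) \;=\; \EE\phi(X(t,x)) \;=\; \EE\phi(X_t(0,x))
\]
for every $\phi\in C_b(H)$ and $x\in H$.

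The second ingredient is already inside the proof of Theorem~\ref{thm:mis_inv}: there exists a random variable $Y\in L^2(\Om,\mathcal{F},\P;H)$ with $\mathcal{L}(Y)=\mu$ such that
\[
  \lim_{\lb\to\infty}\EE|X_{\lb}(0,0)-Y|^2_H = 0,
\]
together with the auxiliary estimate
\[
  \lim_{\lb\to\infty}\EE|X_{\lb}(0,x)-X_{\lb}(0,0)|^2_H = 0\qquad\forall\,x\in H.
\]
Combining these two limits by the triangle inequality gives $X_{\lb}(0,x)\to Y$ in $L^2(\Om;H)$, hence in probability, hence in distribution, for every fixed $x\in H$.

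The third and final step is a routine passage to the limit under the expectation. Since $\phi\in C_b(H)$ is continuous and bounded, extracting an almost-surely convergent subsequence from $X_{\lb}(0,x)\to Y$ (possible by $L^2$-convergence) and applying the bounded convergence theorem to any such subsequence yields $\EE\phi(X_{\lb}(0,x))\to\EE\phi(Y)=\int_H\phi(y)\,\mu(dy)$; as the limit does not depend on the subsequence, the full net converges:
\[
  \lim_{t\to\infty} P_t\phi(x) \;=\; \EE\phi(Y) \;=\; \int_H \phi(y)\,\mu(dy).
\]

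No serious obstacle remains. The only mildly delicate point is the identification of laws in the first step, which rests purely on the stationarity of the extended Wiener process; after that, everything is a mechanical unpacking of what Theorem~\ref{thm:mis_inv} already provides.
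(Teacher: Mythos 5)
Your proposal is correct and follows essentially the same route as the paper: identify $\mathcal{L}(X(t,x))$ with $\mathcal{L}(X_t(0,x))$ via the two-sided extension of the noise, invoke the $L^2$-limit $Y$ with $\mathcal{L}(Y)=\mu$ from Theorem~\ref{thm:mis_inv}, and pass to the limit under the expectation using boundedness and continuity of $\phi$. Your version is in fact slightly more careful than the paper's, which asserts the identification as an equality of random variables and appeals directly to dominated convergence.
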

             
\begin{proof}
  Set $Y=\lim\limits_{s\to-\infty} X(0,-s,x)\in L^2(\Omega;H)$, which
  exists in virtue of Theorem \ref{thm:mis_inv}.  Then $$P_t
  \phi(x)=\EE\left[\phi(X(t,0,x))\right]=\EE
  \left[\phi(X(0,-t,x))\right].$$
  By the dominated convergence theorem
  it follows that
  \begin{align*}
    \lim_{t\to \infty} P_t\phi(x) =\EE \left[\phi (Y)\right]=\int_H
    \phi(y)\mu({\rm d}y).
  \end{align*}
\end{proof}
         
\section{The infinitesimal generator of $P_t$}

\begin{lemma}
  For any $p\geq 1$ $P_t$ has a unique extension to a strongly
  continuous semigroup of contraction in $L^p(H,\mu)$ which we still
  denote by $P_t$.
\end{lemma}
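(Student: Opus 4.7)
The plan is to verify three things in turn: that $P_t$ acts as a contraction on $C_b(H)$ with respect to the $L^p(H,\mu)$-norm, that the resulting operators extend uniquely to $L^p(H,\mu)$ while preserving the semigroup identity, and finally that the extension is strongly continuous at $t=0$.

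The contraction estimate rests on the invariance of $\mu$. For $\phi \in C_b(H)$, Jensen's inequality applied to the convex function $r \mapsto |r|^p$ gives the pointwise bound
\begin{equation*}
|P_t \phi(x)|^p = |\mathbb{E}\phi(X(t,x))|^p \leq \mathbb{E}|\phi(X(t,x))|^p = P_t(|\phi|^p)(x).
\end{equation*}
Integrating against $\mu$ and using the invariance identity $\int_H P_t\psi \, d\mu = \int_H \psi \, d\mu$ (valid for every $\psi \in C_b(H)$ by Theorem~\ref{thm:mis_inv}, applied to $\psi = |\phi|^p \in C_b(H)$) yields $\|P_t \phi\|_{L^p(\mu)} \leq \|\phi\|_{L^p(\mu)}$. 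Since $\mu$ is a Borel probability measure on the separable Hilbert space $H$, a standard approximation argument (simple functions plus a Lusin-type regularization) shows that $C_b(H)$ is dense in $L^p(H,\mu)$. The contraction estimate then determines $P_t$ uniquely as a bounded contraction on $L^p(H,\mu)$, and the semigroup identity $P_{t+s} = P_t P_s$, which holds on $C_b(H)$ by the Markov property of $X(t,x)$, passes to the closure by continuity of $P_t$ and $P_s$.

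Strong continuity is the most delicate point. For $\phi \in C_b(H)$ and fixed $x \in H$, Theorem~\ref{thm:ex} guarantees $X(\cdot,x) \in C([0,T];H)$ almost surely, so $\phi(X(t,x)) \to \phi(x)$ a.s.\ as $t \to 0^+$; the bound $|\phi(X(t,x))| \leq \|\phi\|_\infty$ and dominated convergence give $P_t \phi(x) \to \phi(x)$ pointwise, and a second application of dominated convergence in the $\mu$-integral (again bounded by $\|\phi\|_\infty$) yields $\|P_t \phi - \phi\|_{L^p(\mu)} \to 0$. To pass from $C_b(H)$ to arbitrary $f \in L^p(H,\mu)$, a standard $\varepsilon/3$ argument, exploiting density and the uniform contraction bound $\|P_t\|_{L^p \to L^p} \leq 1$, suffices. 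Strong continuity at $t > 0$ then follows from the semigroup property. The only genuine subtlety is the pointwise continuity $P_t\phi(x) \to \phi(x)$, which is nevertheless immediate from the sample-path regularity supplied by Theorem~\ref{thm:ex}; the remaining steps are routine.
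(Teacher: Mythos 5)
Your proof is correct and follows essentially the same route as the paper: the pointwise bound $|P_t\phi|^p \le P_t|\phi|^p$ (the paper invokes H\"older where you invoke Jensen, which is the same estimate here), integration against the invariant measure, density of $C_b(H)$ in $L^p(H,\mu)$, and dominated convergence for strong continuity. You simply spell out the strong-continuity step (pathwise continuity of $X(\cdot,x)$ from Theorem~\ref{thm:ex}, two applications of dominated convergence, and the $\varepsilon/3$ density argument) that the paper compresses into one sentence.
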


\begin{proof}
  Let $\phi \in C_b(H)$ and $\mu_t$ be the law of $X(t,x)$. By Holder
  inequality we have that
  \begin{align*}
    \left|P_t \phi(x)\right|^p \leq P_t \left|\phi(x)\right|^p.
  \end{align*}
  Integrating this identity with respect to $\mu$ over $H$ and taking
  into account the invariance of $\mu$, we obtain
  \begin{align*}
    \int_H \left|P_t \phi(x)\right|^p \mu({\rm d}x) \leq \int_H P_t
    \left|\phi\right|^p(x)\mu({\rm d}x)=\int_H \left|\phi(x)\right|^p
    \mu({\rm d}x).
  \end{align*}
  Since $C_b(H)$ is dense in $L^p(H,\mu)$, $P_t$ can be uniquely
  extended to a contraction semigroup in $L^p(H,\mu)$. The strong
  continuity of $P_t$ follows from the dominated convergence theorem.
\end{proof}
             
Taking into account the Hille-Yosida's theorem, from the previous
lemma we deduce that the infinitesimal generator of $P_t$ on
$L^p(H,\mu)$ (which we denote by $N$) is closed, densely defined and
it satisfies
\begin{equation*}
  \left|\lambda R(\lambda,A)\right| \leq 1.
\end{equation*} 
We want to show that $N$ is the closure of the differential operator
$N_0$ defined by
\begin{align*}
  N_0 \phi =\dfrac{1}{2} {\rm Tr}[QD^2 \phi(x)]+\left\langle
    x,AD\phi(x)\right\rangle + \left\langle F(x),D\phi(x)\right\rangle
\end{align*} 
on $\mathcal{E}_A(H)= {\rm linear\ span}\left\{\phi = e^{\left\langle
      x,h\right\rangle}\mid h\in D(A)\right\}$.
             
We recall that the operator 
\begin{equation*}
  L\phi =\dfrac{1}{2} {\rm Tr}[QD^2 \phi(x)] + \langle x, AD\phi(x)
  \rangle 
\end{equation*}
is the Ornstein-Uhlenbeck operator and it verifies
\begin{align}\label{eq:L-phi}
  \left|L\phi(x)\right| \leq a +b\left|x\right|, \quad x\in H.
\end{align}
(see \cite[Section 2.6]{dp:kolmogorov}).
             
Then, in order to show that $N_0$ is well-defined as an operator with
values in $L^p(H,\mu)$ we need that $F(x)\in L^p(H,\mu)$.  This is
provided by the following result.

\begin{lemma}
  Under Hypothesis \ref{hp:1}, there exists $c_m$ depending only on
  $A_{\eta}$ and $F_{\eta}$ such that
  \begin{equation}\label{eq:stima-integrale}
    \int_H \left|x\right|^{2m}_H \, \mu({\rm d}x) \leq c_m.
  \end{equation}
\end{lemma}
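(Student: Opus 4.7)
The plan is to combine the pathwise moment bound from Lemma~\ref{lemma:stima_potenza2m} with the invariance of $\mu$, using a truncation argument to circumvent the fact that $x \mapsto |x|_H^{2m}$ is not bounded and hence not a priori allowable as a test function against $\mu$ via Lemma~\ref{lemma:uguaglianzaP_t}.

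Concretely, for each integer $N \geq 1$ I would introduce the truncated test function
\begin{equation*}
  \phi_N(x) = |x|_H^{2m} \wedge N,
\end{equation*}
which is bounded, continuous, and nonnegative, hence lies in $C_b(H)$. By Lemma~\ref{lemma:uguaglianzaP_t} applied to $\phi_N$, for every $x \in H$,
\begin{equation*}
  \int_H \phi_N(y)\,\mu({\rm d}y) = \lim_{t\to\infty} P_t \phi_N(x)
  = \lim_{t\to\infty} \EE\bigl[\phi_N(X(t,x))\bigr].
\end{equation*}
Since $\phi_N(y) \leq |y|_H^{2m}$ pointwise, the integrand on the right is dominated by $|X(t,x)|_H^{2m}$, so Lemma~\ref{lemma:stima_potenza2m} yields
\begin{equation*}
  \EE\bigl[\phi_N(X(t,x))\bigr] \leq \EE|X(t,x)|_H^{2m}
  \leq C_m\bigl(1 + e^{-m\omega_1 t}|x|_H^{2m}\bigr),
\end{equation*}
and passing to the limit $t \to \infty$ (choosing, e.g., $x=0$) gives
\begin{equation*}
  \int_H \phi_N(y)\,\mu({\rm d}y) \leq C_m
\end{equation*}
uniformly in $N$.

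Finally, since $\phi_N(y) \uparrow |y|_H^{2m}$ monotonically as $N \to \infty$, the monotone convergence theorem upgrades the uniform bound to
\begin{equation*}
  \int_H |y|_H^{2m}\,\mu({\rm d}y) \leq C_m =: c_m,
\end{equation*}
which is the desired inequality \eqref{eq:stima-integrale}. I do not expect any substantial obstacle: the only subtle point is that $|\cdot|_H^{2m}$ is not in $C_b(H)$, and the two-step argument (truncate, apply Lemma~\ref{lemma:uguaglianzaP_t}, dominate by the moment estimate, then monotone convergence) is precisely tailored to handle this. Both the uniform-in-$t$ moment bound and the evaluation at $x=0$ (which makes the exponential term irrelevant in the limit) are available directly from results already established.
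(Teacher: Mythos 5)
Your proof is correct and follows essentially the same route as the paper: both arguments truncate $|y|_H^{2m}$ to a bounded continuous test function (the paper uses $|y|^{2m}/(1+\beta|y|^{2m})$ where you use $|y|^{2m}\wedge N$), apply Lemma~\ref{lemma:uguaglianzaP_t} together with the moment bound of Lemma~\ref{lemma:stima_potenza2m}, and then remove the truncation by monotone convergence. The choice of truncation is an inessential difference.
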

             
\begin{proof}
  Denote by $\mu_{t,x}$ the law of $X(t,x)$. Then by lemma
  \ref{lemma:stima_potenza2m} we have that for any $\beta>0$
  \begin{multline*}
    \int_H \dfrac{\left|y\right|^{2m}}{1+\beta \left|y\right|^{2m}}
    \mu_{t,x}({\rm d}y) \leq \int_H \left|y\right|^{2m} \mu_{t,x}({\rm
    d}y)
    \\
    = \EE \left|X(t,x)\right|^{2m} \leq C_m(1+e^{-m\omega_1
      t}\left|x\right|^{2m}), \quad x\in H.
  \end{multline*}
  Consequently, letting $t \to \infty$ we find, taking into
  account Lemma \ref{lemma:uguaglianzaP_t},
  \begin{multline*}
    \int_H \dfrac{\left|y\right|^{2m}}{1+\beta \left|y\right|^{2m}}
    \mu(dy) =\lim_{t\to \infty} P_t \phi(x) 
    =\lim_{t \to \infty} \int_H \dfrac{\left|y\right|^{2m}}{1+\beta
      \left|y\right|^{2m}} \mu_{t,x}({\rm d}y) 
    \\
    \leq \lim_{t \to \infty }
    C_m(1+e^{-m\omega_1 t}\left|x\right|^{2m}),
  \end{multline*} 
  which yields \eqref{eq:stima-integrale}.
\end{proof}

Applying formula \eqref{eq:crescita-pol-F_eta} we immediately obtain
the following

\begin{corollary}\label{cor:stima-int-F}
  We have
  \begin{equation}
    \int_H \left|F_{\eta}(x)\right|^{2m} \, \mu({\rm d}x) <\infty.
  \end{equation}
\end{corollary}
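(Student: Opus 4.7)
The plan is to reduce the integrability of $|F_\eta|^{2m}$ under $\mu$ to the polynomial moment bound just proved in the preceding lemma. The computation is genuinely short, so the proposal is essentially to connect three facts with the right choice of exponent.

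First, I would invoke the polynomial growth estimate \eqref{eq:crescita-pol-F_eta}, namely $\left|F_\eta(x)\right|_H \leq C(1+\left|x\right|_H^3)$, and raise it to the $2m$-th power. Using the elementary inequality $(a+b)^{2m} \leq 2^{2m-1}(a^{2m}+b^{2m})$ and absorbing constants, this yields a pointwise bound of the form
\begin{equation*}
  \left|F_\eta(x)\right|_H^{2m} \leq C_m \bigl(1 + \left|x\right|_H^{6m}\bigr)
\end{equation*}
valid for $x \in D(F)$.

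Second, I would integrate both sides against the invariant measure $\mu$, which gives
\begin{equation*}
  \int_H \left|F_\eta(x)\right|_H^{2m}\, \mu({\rm d}x)
  \leq C_m \left( 1 + \int_H \left|x\right|_H^{6m}\, \mu({\rm d}x)\right),
\end{equation*}
and then apply the preceding lemma with the exponent $m$ replaced by $3m$, which produces $\int_H |x|_H^{6m}\mu({\rm d}x) \leq c_{3m} < \infty$. Combining the two bounds yields the claimed finiteness.

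The only subtle point is that the growth estimate is a priori stated on $D(F) = L^6(0,1)\times L^2(0,1)$, whereas the integration is over $H$; so a tacit check is needed that $\mu(D(F)) = 1$, which I expect to be the main (very mild) obstacle. This follows because the solution $X(t,x)$ takes values in $V = H^1(0,1) \times L^2(0,1)$ for a.e. $t$ by Theorem \ref{thm:ex}, and the 1D Sobolev embedding $H^1(0,1) \hookrightarrow L^\infty(0,1) \subset L^6(0,1)$ guarantees that the law $\mu_{t,x}$ charges $D(F)$; passing to the weak limit $\mu = \lim_{t\to\infty}\mu_{t,x}$ preserves this support property, so the inequality applies $\mu$-a.e. and the integration step is justified.
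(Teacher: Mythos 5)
Your argument is correct and is exactly the one the paper intends: the paper derives the corollary by "applying formula \eqref{eq:crescita-pol-F_eta}" to the moment bound \eqref{eq:stima-integrale} with $m$ replaced by $3m$, precisely as you do. Your extra remark about $\mu$ charging $D(F)$ is a reasonable (and mild) technical point that the paper simply glosses over.
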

             
The corollary implies that $N_0\phi \in L^p(H,\mu)$ for all $\phi \in
\mathcal{E}_{A}(H)$ as required.  We can now show that $N_0\phi
=N\phi$ for all $\phi \in \mathcal{E}_{A}(H)$.

\begin{lemma}\label{lemma:Ito-N}
  For any $\phi \in \mathcal{E}_{A}(H)$ we have
  \begin{align}\label{eq:Ito-N}
    \EE \left[\phi(X(t,x))\right]=\phi(x)+ \EE \left[\int_0^t N_0
      \phi(X(s,x)) \, {\rm d}s\right], \quad t\geq 0, \, x\in H.
  \end{align}
  Moreover $\phi \in D(N)$ and $N_0\phi=N \phi$.
\end{lemma}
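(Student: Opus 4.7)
The strategy is to first establish the Dynkin-type identity \eqref{eq:Ito-N} by applying Itô's formula to the approximating solution $X_\ep$ of \eqref{eq:approx} and then passing to the limit $\ep \to 0$; the operator identity $N\phi = N_0\phi$ on $\mathcal{E}_A(H)$ will then follow from the strong continuity of $P_t$ on $L^p(H,\mu)$ by a standard Dynkin-formula argument.

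Fix $\phi \in \mathcal{E}_A(H)$. Since $\phi$ is a linear combination of exponentials $e^{\langle x,h\rangle}$ with $h \in D(A)$, both $D\phi(x)$ and $D^2\phi(x)$ are of the form $\phi(x)\,h$ and $\phi(x)\,h\otimes h$ respectively, so $AD\phi(x) = \phi(x)Ah$ is defined for every $x \in H$ and $N_0\phi$ is a well-defined measurable function. Because $F_{\eta,\ep}$ is Lipschitz, the classical Itô formula applies to $\phi(X_\ep(\cdot,x))$, after an auxiliary Yosida approximation of the unbounded generator $A_\eta$ (justified by Proposition \ref{prop:genA}); taking expectation annihilates the stochastic integral and yields
\begin{equation*}
  \EE\phi(X_\ep(t,x)) = \phi(x) + \EE \int_0^t N_0^\ep \phi(X_\ep(s,x))\,{\rm d}s,
\end{equation*}
where $N_0^\ep$ coincides with $N_0$ except for $F_{\eta,\ep}$ in place of $F_\eta$ (recall $A_\eta + F_\eta = A + F$, so the relabelling of the drift does not affect the sum).

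To pass to the limit $\ep \to 0$, estimate \eqref{stimaC^0} gives $X_\ep \to X$ in $L^2_W(\Omega;C([0,T];H))$, so along a subsequence $X_\ep(s,x) \to X(s,x)$ pointwise a.s. The moment control of Lemma \ref{lemma:stima_potenza2m}, which also applies to the approximations since its proof relies only on the dissipativity of $A_\eta$ and Gaussian moments of $W_{A_\eta}$, provides uniform integrability of $\phi(X_\ep(t,x))$ and of the Ornstein-Uhlenbeck contributions to $N_0^\ep\phi$, bounded via \eqref{eq:L-phi}. The \emph{main obstacle} is the nonlinear term $\langle F_{\eta,\ep}(X_\ep),D\phi(X_\ep)\rangle$: since $F_\eta$ has cubic growth and is not Lipschitz, pointwise convergence alone is insufficient. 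Here we invoke the uniform bound \eqref{stima_f_ep} on $\EE\int_0^T |f_{\eta,\ep}(X_\ep)|^2_H\,{\rm d}t$, which together with the a.s.\ pointwise convergence $X_\ep \to X$ and the explicit convergence $f_{\eta,\ep} \to f_\eta$ delivers equi-integrability on $\Omega \times [0,T]$; a Vitali argument then yields $\langle F_{\eta,\ep}(X_\ep),D\phi(X_\ep)\rangle \to \langle F_\eta(X),D\phi(X)\rangle$ in $L^1(\Omega \times [0,T])$. This establishes \eqref{eq:Ito-N}.

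For the final assertion, rewrite \eqref{eq:Ito-N} in semigroup form as $P_t\phi(x) - \phi(x) = \int_0^t P_s(N_0\phi)(x)\,{\rm d}s$. Corollary \ref{cor:stima-int-F}, combined with \eqref{eq:crescita-pol-F_eta} and the polynomial nature of $D\phi$ and $D^2\phi$, shows that $N_0\phi \in L^p(H,\mu)$ for every $p \geq 1$. Since $P_s$ is strongly continuous on $L^p(H,\mu)$, dividing by $t$ and letting $t \to 0^+$ gives $t^{-1}(P_t\phi - \phi) \to N_0\phi$ in $L^p(H,\mu)$, which is precisely the statement $\phi \in D(N)$ and $N\phi = N_0\phi$.
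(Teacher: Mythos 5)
Your proof is correct, but it takes a genuinely different route from the paper's on the key step. For the identity \eqref{eq:Ito-N} the paper offers no argument beyond ``follows easily by applying It\^o's formula''; your approximation scheme (It\^o's formula for the Lipschitz problem \eqref{eq:approx}, then $\ep\to 0$ using \eqref{stimaC^0} for the trajectories and \eqref{stima_f_ep} plus Vitali for the cubic drift term) supplies exactly the detail the paper omits, and is the natural way to justify the formula given that $F$ is only densely defined. For the assertion $\phi\in D(N)$ and $N\phi=N_0\phi$, the paper argues as follows: it notes that $h^{-1}(P_h\phi-\phi)\to N_0\phi$ pointwise and then proves, via \eqref{eq:L-phi}, H\"older's inequality, the invariance of $\mu$ and Corollary \ref{cor:stima-int-F}, that the difference quotients $h^{-1}(P_h\phi-\phi)$, $h\in(0,1]$, are equibounded in $L^p(H,\mu)$; the conclusion then rests (implicitly) on weak $L^p$ convergence of the difference quotients together with the fact that the weak and strong generators of a $C_0$-semigroup coincide. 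You instead rewrite \eqref{eq:Ito-N} as $P_t\phi-\phi=\int_0^t P_s(N_0\phi)\,{\rm d}s$ and use $N_0\phi\in L^p(H,\mu)$ together with the strong continuity of $P_s$ to obtain \emph{norm} convergence of $t^{-1}(P_t\phi-\phi)$ to $N_0\phi$. Your version is cleaner and gives a stronger conclusion; its only extra cost is the identification of the pointwise expression $x\mapsto \EE\left[N_0\phi(X(s,x))\right]$ with the $L^p(H,\mu)$-extension of $P_s$ applied to the unbounded function $N_0\phi$, plus the Fubini step converting $\EE\int_0^t$ into $\int_0^t P_s$ --- both routine, but worth a line, and both resting on the same moment bounds (Lemma \ref{lemma:stima_potenza2m}, Corollary \ref{cor:stima-int-F}) that the paper invokes for its equiboundedness estimate.
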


\begin{proof}
  Equality \eqref{eq:Ito-N} follows easily by applying It\^o's
  formula. It remains to prove that $\mathcal{E}_{A}(H)\subset D(N)$
  and $N_0\phi=N\phi$.  Since it holds that
  \begin{align*}
    \lim_{h \to \infty}\dfrac{1}{h}\left(P_t\phi(x)-\phi(x)\right) =
    N_0\phi(x)
  \end{align*}
  pointwise, it is enough to show that 
  \begin{align*}
    \dfrac{1}{h}(P_h \phi-\phi), \quad h\in(0,1],
  \end{align*}
  is equibounded in $L^p(H,\mu)$. 
                 
  We note that, in view of \eqref{eq:L-phi} and \eqref{eq:Ito-N}, for
  any $x\in H$ we have
  \begin{align*}
    \left|P_h \phi (x)-\phi(x)\right| \leq \int_0^h \EE
    \left[a+b\left|X(s,x)\right| +
    \left|\phi\right|_0\left|F(X(s,x))\right| \right]
    \, {\rm d}s.
  \end{align*}
  By H\"older's inequality we find that
  \begin{align*}
    |P_h\phi(x)&-\phi(x)|^p \leq h^{p-1} \int_0^h \EE
    \left[a+b\left|X(s,x)\right| + \left|\phi\right|_0
      \left|F(X(s,x))\right| \right]^p \, {\rm d}s
    \\
    &\leq c_ph^{p-1} \int_0^h \EE
    \left[a+b\left|X(s,x)\right|\right]^p \, {\rm d}s + c_p h^p
    \left|\phi\right|_0^p \int_0^h \EE\left|F(X(s,x))\right|^p \, {\rm
      d}s
    \\
    &= c_ph^{p-1} \int_0^h P_s\left(a+b\left|\cdot\right|\right)^p \,
    {\rm d}s + c_p h^{p-1} \left|\phi\right|_0^p \int_0^h
    P_s\left|F(\cdot)\right|^p \, {\rm d}s.
  \end{align*}
  Integrating with respect to $\mu$ over $H$ and taking into account
  the invariance of $\mu$, the above formula yields
  \begin{equation*}
    \left|P_h \phi -\phi\right|^p_{L^p(H,\mu)} 
    \leq h^{p} \int_H
    \left[\left(a+b\left|x\right|^p\right)+\left|\phi\right|^p
      \left|F(x)\right|^p\right]\mu({\rm d}x) < \infty, 
  \end{equation*}
  thanks to Corollary \ref{cor:stima-int-F}.  Consequently $1/h(P_h
  \phi-\phi)$ is equibounded in $L^p(H,\mu)$ as claimed.
\end{proof}
             
\begin{theorem}
  Assume that Hypothesis \ref{hp:1} holds. Then $N$ is the closure of
  $N_0$ in $L^p(H,\mu)$.
\end{theorem}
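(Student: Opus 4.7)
The plan is to apply the standard core criterion for a $C_0$-semigroup generator: since $\mathcal{E}_A(H) \subset D(N)$ with $N\phi = N_0\phi$ (by Lemma \ref{lemma:Ito-N}), to conclude $\overline{N_0} = N$ it is enough to verify that $\mathcal{E}_A(H)$ is dense in $L^p(H,\mu)$ and that $(\lambda I - N_0)(\mathcal{E}_A(H))$ is dense in $L^p(H,\mu)$ for some $\lambda > 0$. Dissipativity of $N_0$ is inherited from $N$ (which generates a contraction semigroup), so the Lumer--Phillips theorem then yields that $\overline{N_0}$ is $m$-dissipative and generates a contraction $C_0$-semigroup agreeing with $P_t$ on the dense subspace $\mathcal{E}_A(H)$, forcing $\overline{N_0} = N$.

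Density of $\mathcal{E}_A(H)$ in $L^p(H,\mu)$ is a routine matter: the exponentials $e^{\langle \cdot, h \rangle}$ separate points of $H$, so a monotone class / Stone--Weierstrass argument together with the tightness of the Borel probability $\mu$ (a consequence of the moment bound \eqref{eq:stima-integrale} deduced via Lemma \ref{lemma:stima_potenza2m}) yields density first in $C_b(H)$ in the topology of uniform convergence on compacts and then in $L^p(H,\mu)$ by dominated convergence.

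The substantive step is the range condition. The natural route is to exploit the approximating systems \eqref{eq:approx}: for each $\ep > 0$ the drift $F_{\eta,\ep}$ is Lipschitz, so the semigroup $P_t^\ep$ admits an invariant measure $\mu^\ep$ and, by the classical theory for semilinear SPDEs with Lipschitz nonlinearity (see \cite{dp:kolmogorov,dpz:ergodicity}), $\mathcal{E}_A(H)$ is a core for the corresponding Kolmogorov generator $N^\ep$ in $L^p(H,\mu^\ep)$. Given $f \in L^p(H,\mu)$ and $\lambda$ large, one picks $\phi_\ep \in \mathcal{E}_A(H)$ with $(\lambda I - N_0^\ep)\phi_\ep$ approximating $f$ in the natural space, and then exploits the identity
\begin{equation*}
(\lambda I - N_0)\phi_\ep \;=\; (\lambda I - N_0^\ep)\phi_\ep + \langle F_\eta - F_{\eta,\ep},\, D\phi_\ep \rangle
\end{equation*}
to transfer the approximation to $(\lambda I - N_0)(\mathcal{E}_A(H))$.

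The principal obstacle is controlling the correction term $\langle F_\eta - F_{\eta,\ep}, D\phi_\ep \rangle$ in $L^p(H,\mu)$-norm as $\ep \to 0$. Two ingredients are required: first, $\ep$-uniform tightness and moment bounds for the family $\{\mu^\ep\}$ together with the weak convergence $\mu^\ep \to \mu$, which follow from the $\ep$-uniform estimates of Lemma \ref{lemma:stima_potenza2m} applied to the approximating dynamics and from the Cauchy bound \eqref{stimaC^0}; second, control of $D\phi_\ep$, which is straightforward because $\phi_\ep$ is an exponential in $\mathcal{E}_A(H)$ so $|D\phi_\ep(x)|$ grows at most linearly in $|x|$. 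Combined with the convergence $F_{\eta,\ep} \to F_\eta$ in $L^p(H,\mu)$, obtained from the pointwise convergence together with the polynomial growth bound \eqref{eq:crescita-pol-F_eta} and the integrability ensured by Corollary \ref{cor:stima-int-F} (via dominated convergence), the correction tends to zero, the range density follows, and the identification $\overline{N_0} = N$ is complete.
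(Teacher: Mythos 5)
Your overall architecture matches the paper's: both arguments reduce, via Lumer--Phillips, to showing that the closure of the range of $\lambda-\bar{N}_0$ contains the dense set $\mathcal{E}_A(H)$, both perturb off the Lipschitz approximations $F_{\eta,\varepsilon}$, and both kill the correction term $\langle F_\eta-F_{\eta,\varepsilon},D\phi_\varepsilon\rangle$ by dominated convergence using the integrability of $F$ from Corollary \ref{cor:stima-int-F}. The gap is in how $\phi_\varepsilon$ is produced and how $D\phi_\varepsilon$ is controlled. You take $\phi_\varepsilon\in\mathcal{E}_A(H)$ from a core of the approximating generator $N^\varepsilon$ so that $(\lambda-N_0^\varepsilon)\phi_\varepsilon$ approximates $f$, and you assert that control of $D\phi_\varepsilon$ is ``straightforward because $\phi_\varepsilon$ is an exponential.'' This does not close the argument: the bound you need on $D\phi_\varepsilon$ must be uniform in $\varepsilon$ and uniform along the approximating sequence inside the core, and the core property gives no such bound --- as the core elements are chosen to approximate the resolvent better, their gradients can blow up, and without $\sup_\varepsilon\|D\phi_\varepsilon\|_0<\infty$ the correction term cannot be sent to zero. (Also, for a genuine exponential $e^{\langle x,h\rangle}$ the gradient is $h\,e^{\langle x,h\rangle}$, which is not of linear growth; only for $e^{i\langle x,h\rangle}$ is it bounded.)

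The paper's route avoids this: for $f\in\mathcal{E}_A(H)$ it solves the modified resolvent equation $\lambda\phi_\varepsilon-L\phi_\varepsilon-\langle F_{\eta,\varepsilon},D\phi_\varepsilon\rangle=f$ \emph{exactly}, obtaining $\phi_\varepsilon\in C_b^1(H)$ (not in $\mathcal{E}_A(H)$) as the Laplace transform of $f$ along the approximating flow, and the $\varepsilon$-uniform gradient bound $|D\phi_\varepsilon|\leq\|Df\|_0/(\lambda-\omega)$ follows from the estimate $\|DX_\varepsilon(t,x)\|_{\mathcal{L}(H)}\leq e^{\omega t}$. One then checks $\phi_\varepsilon\in D(\bar{N}_0)$, writes $\lambda\phi_\varepsilon-\bar{N}_0\phi_\varepsilon=f+\langle F_{\eta,\varepsilon}-F,D\phi_\varepsilon\rangle$, and your dominated-convergence step applies verbatim. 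A secondary issue in your version is that you work in $L^p(H,\mu^\varepsilon)$ with the invariant measures of the approximating problems and must transfer the approximation back to $L^p(H,\mu)$; weak convergence $\mu^\varepsilon\to\mu$ does not by itself control $L^p$-norms of the unbounded integrands involved. The paper sidesteps this entirely by keeping the single measure $\mu$ throughout. To repair your proof, replace the choice of $\phi_\varepsilon$ from a core by the exact solution of the approximate resolvent equation together with its uniform gradient estimate.
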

             
\begin{proof}
  By Lemma \ref{lemma:Ito-N}, $N$ extends $N_0$. Since $N$ is
  dissipative (it is the infinitesimal generator of a $C_0$
  contraction semigroup), so it is $N_0$. Consequently $N_0$ is
  closable.  Let us denote by $\bar{N}_0$ its closure. We have to show
  that $\bar{N}_0=N$.
                
  Let $\lb >0$ and $f\in \mathcal{E}_{A_{\eta}}(H)$. Consider the
  approximating equation
  \begin{equation}\label{eq:propN_0}
    \lb \phi_{\ep}-L\phi_{\ep}-\left\langle
      F_{\eta,\ep},D\phi_{\ep}\right\rangle=f, \quad \ep > 0
  \end{equation}
  By \cite[Theorem 3.21]{dp:kolmogorov} we have that Equation
  \eqref{eq:propN_0} has a unique solution $\phi_{\ep} \in C_b^1(H)$
  given by
  \begin{equation*}
    \phi_{\ep}(x)= {\mathbb E} \int_0^1 e^{-\lb t}f(X_{\ep}(t,x)) \, {\rm d}t, \quad \forall x\in H.
  \end{equation*}  
  Moreover, for all $h\in H$ we have
  \begin{align*}
    \left\langle D\phi_{\ep}(x),h\right\rangle = \int_0^{\infty}
    e^{-\lb t} \EE \left[\left\langle
        Df(X_{\ep}(t,x)),DX_{\ep}(t,x)[h]\right\rangle\right] \, {\rm d}t.
  \end{align*}
  and by \cite[Proposition 11.2.13]{dpz:second-order} we have
  \begin{align*}
    \left\|DX_{\ep}(t,x)\right\|_{\mathcal{L}(H)} \leq e^{\omega t};
  \end{align*}
  consequently we obtain
  \begin{align*}
    \left|D\phi_{\ep}(x)\right|_H \leq \dfrac{1}{\lb
      -\omega}\left\|Df\right\|_0.
  \end{align*}
  Arguing as in \cite[Theorem 11.2.14]{dpz:second-order} we can write
  \eqref{eq:propN_0} as
  \begin{align*}
    \lb \phi_{\ep}-\bar{N}_0 \phi_{\ep} =f+ \left\langle
      F_{\eta,\ep}-F,D\phi_{\ep}\right\rangle.
  \end{align*}             
  We claim that
  \begin{align*}
    \lim_{\ep \to 0} \left\langle F_{\eta,\ep}-F,
      D\phi_{\ep}\right\rangle =0 \quad \textrm{in} \ L^p(H,\mu).
  \end{align*}
  In fact, we have
  \begin{align*}
    \int_H \left|\left\langle
        F_{\eta,\ep}(x)-F(x),D\phi_{\ep}\right\rangle\right|^p \, 
    \mu({\rm d}x)\leq \dfrac{1}{\lambda-\omega}\left\|Df\right\|_0^p
    \int_H \left|F_{\eta,\ep}(x)-F(x)\right|^p \, \mu({\rm d}x).
  \end{align*}
  Clearly,
  \begin{align*}
    \lim_{\ep \to 0} \left|F_{\eta,\ep}(x)-F(x)\right|^p =0, \quad
    \mu-a.e.
  \end{align*}
  Moreover
  \begin{equation*}
    \left|F_{\eta,\ep}(x)-F(x)\right|^p \leq 2\left|F(x)\right|^p,
    \quad x\in H.
  \end{equation*}
  Therefore, the claim follows from the dominated convergence theorem,
  since
  \begin{equation*}
    \int_H \left|F(x)\right|^p \, \mu({\rm d}x) <\infty
  \end{equation*}
  in virtue of Corollary \ref{cor:stima-int-F}. In conclusion we have
  proved that the closure of the range of $\lb - \bar{N}_0$ includes
  $\mathcal{E}_A(H)$ which is dense in in $L^p(H,\mu)$. Now the
  theorem follows from Lumer-Phillips theorem.
\end{proof}

\section*{References}

\def\cprime{$'$}

\end{document}